\definecolor{foge}{rgb}{0.1, 0.6, 0.1}
\numberwithin{equation}{section}
\newtheorem{theo}{Theorem}[section]
\newtheorem{prop}[theo]{Proposition}
\newtheorem{lem}[theo]{Lemma}
\newtheorem{cor}[theo]{Corollary}
\newtheorem{rem}[theo]{Remark}
\newtheorem{ex}[theo]{Example}
\theoremstyle{definition} 
\newtheorem{deff}[theo]{Definition}
\newcommand{\la}{\lambda}
\newcommand{\Pp}{\mathcal{P}}
\newcommand{\F}{\mathcal{F}}
\newcommand{\M}{\mathcal{M}}
\newcommand{\C}{\mathcal{C}}
\newcommand{\Zo}{\mathbb{Z}_{\geq 0}}
\title{A bijective proof of a generalization of the non-negative crank--odd mex identity}
\author{Isaac Konan}
\address{Universit\'e de Lyon, Universit\'e Claude Bernard Lyon 1, UMR5208, Institut Camille Jordan, F-69622 Villeurbanne, France}
\email{konan@math.univ-lyon1.fr}
\keywords{Integer partitions, Dyson's crank, Mex, Durfee decomposition}
\begin{document}
      
\begin{abstract}
Recent works of Andrews--Newman and Hopkins--Sellers unveil an interesting relation between two partition statistics, the crank and the mex. They state that, for a positive integer $n$, there are as many partitions of $n$ with non-negative crank as partitions of $n$ with odd mex. In this paper, we give a bijective proof of a generalization of this identity provided by Hopkins, Sellers and Stanton. Our method uses an alternative definition of the Durfee decomposition, whose combinatorial link to the crank was recently studied by Hopkins, Sellers, and Yee.  
\end{abstract}

\maketitle


\section{Introduction}


\subsection{State of art}

An \textit{integer partition} is a finite non-increasing sequence of positive integers. It then has the form $\la=(\la_1,\ldots,\la_s)$ with $\la_1\geq\cdots\geq \la_s\geq 1$. The terms $\la_i$ are called the parts of $\la$, and we denote by $\ell(\la)=s$ and $|\la|=\la_1+\cdots+\la_s$ respectively the length and the weight of the partition $\la$. For example, $\ell(\emptyset)=|\emptyset|=0$. For a non-negative integer $n$, an integer partition with weight $n$ is commonly called a partition of $n$. For example, the partition of $3$ are $(3),(2,1)$ and $(1,1,1)$, respectively with length $1,2$ and $3$. Let $\Pp$ be the set of integer partitions. For $\F\subset \Pp$, $\overline{\F}$ is the complementary of $\F$, i.e. $\overline{\F} = \{\la \in \Pp:\la \notin \F\}$. In the remainder of the paper, the term ``partition'' stands for an integer partition, and $\sharp A$ denotes the number of elements in the set $A$.

In a 1988 paper \cite{AG88}, Andrews and Garvan formally provide a definition of Dyson's crank, a partition statistic introduced by Dyson in \cite{D44} to combinatorially explain a divisibility property of partitions.  
Let $\la$ be a partition. Set $\omega(\la)=\sharp\{i\in \{1,\ldots,\ell(\la)\}:\la_i=1\}$, the number of occurrences of $1$ as part of $\la$, and set $\eta(\la)=\sharp\{i \in \{1,\ldots,\ell(\la)\} :\la_i>\omega(\la)\}$ the number of parts greater than $\omega(\la)$. The crank of $\la$, denoted $crank(\la)$, is defined by the relation
\begin{equation}
crank(\la) = \begin{cases}
             \la_1  \ \ \ \ \ \ \ \ \ \ \ \ \ \ \text{if} \ \ \omega(\la)=0\,,\\
             \eta(\la)-\omega(\la)\ \ \text{if} \ \ \omega(\la)>0\,.
             \end{cases}
\end{equation}
In particular, set $crank(\emptyset)=0$. One can easily check that $-|\la|\leq crank(\la)\leq |\la|$.
Given integers $m,n$ with $-n\leq m\leq n$, $C(m,n)$ denotes the number of partitions of weight $n$ and crank $m$, with the exception that $C(1,1)=-C(0,1)=1$. In \cite{G88}, Garvan explicitly provides the generating function for the crank.

\begin{theo}[Garvan]\label{theo:Garvan} We have 
\begin{equation}
(x-1)q+\sum_{\la\in \Pp} x^{crank(\la)}q^{|\la|}=\sum_{n\geq 0}\sum_{m=-n}^n C(m,n)x^mq^n= \frac{(q;q)_\infty}{(qx,qx^{-1};q)_\infty}
\end{equation}
where $(a_1,\ldots,a_t;q)_\infty = \prod_{k\geq 0}\prod_{i=1}^t(1-a_iq^k)$.
\end{theo}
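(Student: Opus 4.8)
The plan is to prove the two equalities separately. The first is pure bookkeeping. For every integer $n\neq 1$ the quantity $C(m,n)$ is, by definition, the number of partitions of $n$ with crank $m$, and $(x-1)q$ contributes nothing in $q$-degrees $\neq 1$, so the $q^n$-coefficients of $\sum_{\la\in\Pp}x^{crank(\la)}q^{|\la|}$ and of $\sum_m C(m,n)x^m$ coincide. For $n=1$ the only partition is $(1)$, with $\omega((1))=1$, hence $crank((1))=\eta((1))-\omega((1))=0-1=-1$; thus $[q^1]\sum_\la x^{crank(\la)}q^{|\la|}=x^{-1}$, whereas the convention $C(1,1)=-C(0,1)=1$ makes the $q^1$-part of $\sum_{n,m}C(m,n)x^mq^n$ equal to $x^{-1}-1+x$, so the discrepancy is exactly $x-1=[q^1]\,(x-1)q$. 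Hence it remains to establish, as an identity of formal power series in $q$,
\[
(x-1)q+\sum_{\la\in\Pp}x^{crank(\la)}q^{|\la|}=\frac{(q;q)_\infty}{(qx,qx^{-1};q)_\infty}.
\]

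First I would split the left sum according to $j:=\omega(\la)$, the number of $1$'s in $\la$; I write $(a;q)_k=\prod_{i=0}^{k-1}(1-aq^i)$ for finite $q$-Pochhammer symbols and use $1/(z;q)_\infty=\sum_{k\ge 0}z^k/(q;q)_k$ freely. If $j=0$ then either $\la=\emptyset$ (contributing $1$) or $\la$ has all parts $\ge 2$ and $crank(\la)=\la_1$; deleting one copy of the largest part shows that the partitions with all parts $\ge 2$ and largest part exactly $m$ are counted by $q^m/(q^2;q)_{m-1}$, so this case contributes $1+\sum_{m\ge 2}x^mq^m/(q^2;q)_{m-1}$. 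Using $(q^2;q)_{m-1}=(q;q)_m/(1-q)$ and $1/(qx;q)_\infty=\sum_{m\ge 0}(qx)^m/(q;q)_m$, a one-line simplification collapses the $j=0$ contribution to $\frac{1-q}{(qx;q)_\infty}-(x-1)q$; in particular the correction term $(x-1)q$ is absorbed here.

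Next, for $j\ge 1$ write $\la=(\mu,1^j)$ with $\mu$ having all parts $\ge 2$; since $1\le j$, the $1$'s do not enter $\eta(\la)$, so $crank(\la)=\sharp\{i:\mu_i>j\}-j$. Splitting $\mu$ into its parts in $\{2,\dots,j\}$ (generating function $1/(q^2;q)_{j-1}$) and its $e$ parts $>j$, each of size $\ge j+1$ and each marked by a factor $x$ (generating function $\sum_{e\ge 0}x^eq^{e(j+1)}/(q;q)_e$), this case contributes $(1-q)\sum_{j\ge 1}\frac{q^jx^{-j}}{(q;q)_j}\sum_{e\ge 0}\frac{x^eq^{e(j+1)}}{(q;q)_e}$. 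Exchanging the two summations (legitimate since each $q$-degree receives only finitely many terms), evaluating the inner sum as $1/(q^{e+1}x^{-1};q)_\infty-1$, and using $(q^{e+1}x^{-1};q)_\infty=(qx^{-1};q)_\infty/(qx^{-1};q)_e$, the $j\ge 1$ contribution becomes $\frac{1-q}{(qx^{-1};q)_\infty}\sum_{e\ge 0}\frac{(qx^{-1};q)_e(qx)^e}{(q;q)_e}-\frac{1-q}{(qx;q)_\infty}$. Adding the two cases and $(x-1)q$, the two copies of $\frac{1-q}{(qx;q)_\infty}$ cancel, leaving $\frac{1-q}{(qx^{-1};q)_\infty}\sum_{e\ge 0}\frac{(qx^{-1};q)_e(qx)^e}{(q;q)_e}$. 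One then finishes with the $q$-binomial theorem $\sum_{e\ge 0}\frac{(a;q)_e}{(q;q)_e}z^e=\frac{(az;q)_\infty}{(z;q)_\infty}$ applied with $a=qx^{-1}$ and $z=qx$ (so $az=q^2$): it turns the series into $(q^2;q)_\infty/(qx;q)_\infty$, and since $(1-q)(q^2;q)_\infty=(q;q)_\infty$ the product equals $\frac{(q;q)_\infty}{(qx,qx^{-1};q)_\infty}$, as desired.

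The main obstacle I anticipate is the bookkeeping in the case $j\ge 1$: one must hit on the decomposition of $\mu$ ``by size relative to $j$'', and then recognize that, after interchanging the order of summation, the residual series is precisely a specialization of the $q$-binomial theorem rather than something requiring a new identity. A secondary subtlety is making the correction term $(x-1)q$, the empty partition, and the unique partition $(1)$ fit together consistently; this is what forces the asymmetric treatment of the part $1$, but it resolves automatically through the $j=0$ computation.
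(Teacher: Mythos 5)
Your argument is correct, and the computations check out: the $j=0$ case does collapse to $\frac{1-q}{(qx;q)_\infty}-(x-1)q$ via $(q^2;q)_{m-1}=(q;q)_m/(1-q)$ and Euler's identity, the $j\geq 1$ case (with $crank(\la)=e-j$ where $e$ counts the parts of $\la$ exceeding $j$) gives $\frac{1-q}{(qx^{-1};q)_\infty}\sum_{e\geq 0}\frac{(qx^{-1};q)_e(qx)^e}{(q;q)_e}-\frac{1-q}{(qx;q)_\infty}$, the two copies of $\frac{1-q}{(qx;q)_\infty}$ cancel, and the $q$-binomial theorem with $a=qx^{-1}$, $z=qx$ finishes the identity; the $n=1$ bookkeeping with $C(1,1)=-C(0,1)=1$ and $C(-1,1)=1$ is also handled correctly. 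Note, however, that the paper does not prove this statement at all: it is quoted from Garvan's work \cite{G88} as a known theorem, so there is no internal proof to compare against. Your derivation is essentially the classical Andrews--Garvan route (condition on $\omega(\la)$, the number of parts equal to $1$, then apply Euler's series for $1/(z;q)_\infty$ and the $q$-binomial theorem), so it supplies a self-contained proof of the cited result rather than an alternative to anything in this paper.
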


\begin{cor}\label{cor:cranksym}
Given integers $n\geq m\geq 0$, we have $C(m,n)=C(-m,n)$.
\end{cor}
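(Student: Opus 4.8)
\section*{Proof proposal}

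The plan is to read off the symmetry directly from Garvan's product formula in \Thm{theo:Garvan}, using that the right-hand side of that identity is manifestly invariant under the substitution $x\mapsto x^{-1}$.

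First I would record the key observation about the product on the right of \Thm{theo:Garvan}: writing
$$(qx,qx^{-1};q)_\infty=\prod_{k\geq 0}(1-xq^{k+1})(1-x^{-1}q^{k+1}),$$
the substitution $x\mapsto x^{-1}$ merely interchanges the two factors $(1-xq^{k+1})$ and $(1-x^{-1}q^{k+1})$ for each index $k$, so $(qx,qx^{-1};q)_\infty$ is unchanged. Since $(q;q)_\infty$ does not involve $x$, the series $G(x,q):=\dfrac{(q;q)_\infty}{(qx,qx^{-1};q)_\infty}$ satisfies $G(x,q)=G(x^{-1},q)$ as a formal Laurent series in $x$ over $\mathbb{Z}[[q]]$.

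Next, by \Thm{theo:Garvan} we have $G(x,q)=\sum_{n\geq 0}\sum_{m=-n}^{n}C(m,n)x^mq^n$. Extracting the coefficient of $q^n$ from both sides of $G(x,q)=G(x^{-1},q)$ and reindexing the summation $m\mapsto -m$ gives the identity of Laurent polynomials
$$\sum_{m=-n}^{n}C(m,n)x^m=\sum_{m=-n}^{n}C(m,n)x^{-m}=\sum_{m=-n}^{n}C(-m,n)x^m,$$
and comparing the coefficient of $x^m$ yields $C(m,n)=C(-m,n)$ for all integers $m,n$; specializing to $n\geq m\geq 0$ is precisely the claim.

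I do not expect a genuine obstacle, since this is essentially a one-line consequence of the shape of Garvan's formula. The only point deserving a moment's care is the exceptional convention $C(1,1)=-C(0,1)=1$ arising from the correction term $(x-1)q$ on the left of \Thm{theo:Garvan}: one checks that this term only affects the coefficient of $q^1$, and that at $n=1$ the resulting values $C(1,1)=C(-1,1)=1$ and $C(0,1)=-1$ are themselves symmetric under $m\mapsto -m$, so the conclusion is untouched. (A purely combinatorial proof via a weight-preserving, crank-negating involution on partitions is less transparent --- conjugation, for instance, does not negate the crank --- so the generating-function route above is the most direct given \Thm{theo:Garvan}.)
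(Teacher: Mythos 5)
Your argument is correct: the symmetry $x\mapsto x^{-1}$ of the product $\frac{(q;q)_\infty}{(qx,qx^{-1};q)_\infty}$ immediately forces $C(m,n)=C(-m,n)$, and you rightly note that the correction term $(x-1)q$ only touches $n=1$, where the values $C(1,1)=C(-1,1)=1$, $C(0,1)=-1$ are already symmetric. However, this is not the route the paper takes for its own proof of the corollary. The paper's explicit proof (Section \ref{sec:cor}) is combinatorial: it recalls the Berkovich--Garvan involution $\Lambda$ on $\Pp\setminus\{\emptyset,(1)\}$, a weight-preserving map with $crank(\Lambda(\la))=-crank(\la)$, built case by case from $\omega(\la)$, $\eta(\la)$, $\rho(\la)$ and the conjugate $\la^*$ --- exactly the kind of crank-negating involution you dismiss at the end as ``less transparent.'' (You are right that conjugation does not do the job; the Berkovich--Garvan map is more intricate precisely for that reason.) The trade-off is this: your generating-function argument is the shortest path to the numerical identity and is essentially how the corollary sits as a consequence of \Thm{theo:Garvan}, but it produces no map. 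The paper needs an explicit bijection because its stated goal is a purely bijective proof of \Thm{theo:main}, assembled in Section \ref{sec:conclusion} as the composite $\Lambda\circ\Gamma_j\circ\Phi_j$; without a concrete crank-sign-reversing involution, that composition --- and hence the headline bijection of the paper --- could not be written down. So your proof is valid as a proof of the corollary in isolation, but it would not serve the role the corollary plays in the paper's overall construction.
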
 
  
Recent works involved the use of a new partition statistic, the \textit{minimal excludant} or \text{mex}, defined as the smallest positive integer which is not a part of the partition.
For $\la\in \Pp$, set $mex(\la)$ to be the mex of $\la$. For example, $mex(\emptyset)=mex((5,3,2,2))=1$. A curious yet interesting connection between the mex and the crank arose from the works of Andrews--Newman \cite{AN20} and Hopkins--Sellers \cite{HS20}, who independently found the following result. 

\begin{theo}\label{theo:crank-mex}
At fixed weight, the number of partitions with non-negative crank is equal to the number of partitions with odd mex.
\end{theo}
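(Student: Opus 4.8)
The plan is to route both statistics through the alternative Durfee decomposition whose combinatorial link to the crank was studied by Hopkins, Sellers and Yee, and to read off the bijection there. As a preliminary reduction, observe that a partition $\lambda$ with $\omega(\lambda)=0$ (no part equal to $1$) has $crank(\lambda)=\lambda_1\geq 0$ and $mex(\lambda)=1$, so such a $\lambda$ lies in both families; it therefore suffices to produce a weight-preserving bijection between $\mathcal{A}_n=\{\lambda\vdash n:\omega(\lambda)\geq 1,\ crank(\lambda)\geq 0\}$ and $\mathcal{B}_n=\{\lambda\vdash n:mex(\lambda)\text{ odd},\ mex(\lambda)\geq 3\}$ and to extend it by the identity on the remaining partitions. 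Writing $\lambda=(\mu_1,\dots,\mu_r,1^{w})$ with $w=\omega(\lambda)\geq 1$ and all $\mu_i\geq 2$, one has $\eta(\lambda)=\sharp\{i:\mu_i>w\}$, so $crank(\lambda)=\eta(\lambda)-w\geq 0$ is equivalent to the single inequality $\mu_w\geq w+1$, that is, to the subpartition $\mu=(\mu_1,\dots,\mu_r)$ containing a $w\times(w+1)$ rectangle in its upper-left corner. This rectangle, sitting on top of the column of $w$ ones, is exactly the ``skeleton'' that the alternative Durfee decomposition is designed to expose.

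The bijection compares $\mathcal{A}_n$ and $\mathcal{B}_n$ through a common ``skeleton plus residual partition'' normal form. On the mex side, a partition with $mex(\lambda)=2k+1$ arises in a unique way by adjoining one part of each size $1,2,\dots,2k$ to a partition $\rho$ of $n-\binom{2k+1}{2}$, the only constraint being $2k+1\notin\rho$; here the skeleton is the staircase $(2k,2k-1,\dots,1)$ and the residual is $\rho$. On the crank side, peeling the $w\times(w+1)$ rectangle off $\mu$ records $\lambda$ as a triple $(w,\alpha,\gamma)$, where $\alpha$ is the overhang to the right of the rectangle (at most $w$ parts) and $\gamma$ is the partition formed by the rows of $\mu$ lying below it, subject to the coupled bound $\gamma_1\leq\mu_w$; the skeleton is the rectangle together with the $w$ ones, and the residual is $(\alpha,\gamma)$. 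The map is obtained by transporting one normalized skeleton onto the other: reassemble the $w\times(w+1)$ rectangle, the $w$ ones, and a suitable initial segment of $\alpha$ into a staircase $(2k,2k-1,\dots,1)$ for an appropriate $k=k(w,\alpha)$; conjugate $\gamma$ so that the ceiling $\gamma_1\leq\mu_w$ becomes the avoidance $2k+1\notin\rho$; and pack $\gamma$ together with the cells of $\alpha$ and of the skeleton not absorbed into the staircase to form $\rho$. Running the recipe in reverse recovers $(w,\alpha,\gamma)$ from $(k,\rho)$, so the map is a bijection.

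The delicate point, and the step I expect to be the main obstacle, is precisely this transport: one must fix $k=k(w,\alpha)$ so that the weight $w(w+1)+w$ of the crank-side skeleton, augmented by the absorbed part of $\alpha$, reorganizes exactly into the triangular weight $\binom{2k+1}{2}$, and one must check that the single bound $\gamma_1\leq\mu_w$ (itself coupled to $\alpha$ via $\mu_w=\alpha_w+w+1$) corresponds under conjugation to the single forbidden value $2k+1$ for $\rho$, rather than to a range of forbidden parts or to a shifted value. I would run the generating-function identity it predicts as a consistency check along the way, namely $\sum_n\sharp\mathcal{A}_n\,q^n=\frac{1}{(q;q)_\infty}\sum_{j\geq 1}\bigl(q^{\binom{2j+1}{2}}-q^{\binom{2j+2}{2}}\bigr)$, equivalently $\sum_n\sharp\{\lambda\vdash n:crank(\lambda)\geq 0\}\,q^n=\frac{1}{(q;q)_\infty}\sum_{j\geq 0}(-1)^j q^{j(j+1)/2}$ (the crank-side series here can also be pinned down non-bijectively from \Cor{cor:cranksym} together with the classical generating function for partitions of crank $0$). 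Finally, the degenerate cases --- $w=1$, or $\alpha$ or $\gamma$ empty, or $\mu$ having exactly $w$ parts --- must be verified by hand, which should be routine once the generic transport is set up.
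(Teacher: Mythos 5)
Your setup is sound --- the reduction to $\omega(\lambda)\geq 1$, the observation that for $\lambda=(\mu_1,\dots,\mu_r,1^w)$ the condition $crank(\lambda)\geq 0$ is exactly $\mu_w\geq w+1$ (a $w\times(w+1)$ rectangle), and the identification of odd-mex partitions with pairs (staircase $(2k,\dots,1)$, partition $\rho$ avoiding $2k+1$) are all correct, and the latter is literally the identification of $\M_0$ with $\bigsqcup_k\{\Delta_{0,2k}\}\times\Pp_{2k+1}$ used in the paper. But the proof stops exactly where the theorem begins: the ``transport'' of one skeleton onto the other is never defined, and you yourself flag it as the main obstacle. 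Two of the specific mechanisms you propose do not work as stated. First, there is no choice of $k=k(w,\alpha)$ depending only on the skeleton data that makes the weights reorganize into $\binom{2k+1}{2}$ and yields an invertible map: already for $w=2$ the skeleton weight $w^2+2w=8$ sits strictly between the triangular numbers $3$ and $10$, so cells must flow between the skeleton and the residual in a way that depends on the whole partition, and controlling that flow bijectively is the entire difficulty. Second, conjugating $\gamma$ converts the ceiling $\gamma_1\leq\mu_w$ into a bound on the \emph{number} of parts, not into the avoidance of the single part size $2k+1$; turning a bounded-largest-part condition into a single forbidden part is not something conjugation does, and no argument is offered for it.

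For comparison, the paper does not attempt a one-shot cut-and-paste. It proves $\sharp\M_0=\sharp\F_0$ (partitions $\lambda$ with $0\notin\{\lambda_i-i\}$) by an \emph{iterated} local move $\phi_0$ on pairs $(\Delta_{0,2k},\lambda)$, with a separate termination argument and a case-by-case proof that $\phi_0$ and its inverse $\psi_0$ match up (Propositions \ref{prop:phipsi}--\ref{prop:psifinite}); the final staircase size $2k$ emerges only at the end of the iteration and is not a function of any fixed ``skeleton''. It then maps $\F_0$ to partitions of crank $\leq 0$ via $\Gamma_0$, using the Hopkins--Sellers--Yee criterion $crank(\lambda)\leq -j\iff\omega(\lambda)\geq d_j^\lambda+j$, and finally composes with the Berkovich--Garvan crank-sign-reversing involution to land in crank $\geq 0$. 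Your rectangle criterion is essentially the $j=0$ case of that criterion, so you have reconstructed the endpoints of the paper's chain, but the three nontrivial bijections linking them (or some substitute for them) are missing; as it stands the proposal is a plan plus a generating-function consistency check, not a proof.
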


In \cite{HSS22}, Hopkins, Sellers and Stanton provide a broad generalization of Theorem \ref{theo:crank-mex} by introducing a notion generalizing the mex. For $j\geq 0$, and $\la$ a partition containing the part $j$, $mex_j(\la)$ is the smallest integer greater than $j$ which is not a part of $\la$. 

\begin{theo}\label{theo:main}
For $j\in \Zo$, at fixed weight greater than $1$, the number of partitions $\la$ with a part $j$ such that $mex_j(\la)-j$ is odd is equal to the number of partitions with crank at least equal to $j$.
\end{theo}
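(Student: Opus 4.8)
The plan is to construct an explicit weight-preserving bijection between the two families of partitions in Theorem \ref{theo:main}, using the crank generating function of \Thm{theo:Garvan} only as a sanity check. The starting observation is that Theorem \ref{theo:crank-mex} is the $j=0$ case (with the convention that every partition ``contains the part $0$''), so one wants a single construction parametrized by $j$. For a partition $\la$ containing a part $j$, the quantity $mex_j(\la)-j$ being odd says that the block of consecutive integers $j, j+1, j+2, \dots$ appearing as parts of $\la$ has even length; equivalently, the ``staircase above $j$'' terminates after an even run. On the other side, ``crank at least $j$'' should be recast through a Durfee-type decomposition: I would use the alternative Durfee decomposition alluded to in the abstract (Hopkins--Sellers--Yee), which encodes a partition as a triangular/staircase core together with two partitions fitting to the right of and below it, in such a way that the crank is read off as the difference of the lengths (or a shifted version thereof) of those two pieces. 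The target condition ``$crank(\la)\ge j$'' then becomes a simple inequality between the two parameters of the decomposition.

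Concretely, the steps I would carry out are: (1) state precisely the alternative Durfee decomposition, i.e. a bijection $\la \leftrightarrow (d,\mu,\nu)$ where $d$ is the size of a staircase, $\mu$ is a partition with parts bounded in terms of $d$, and $\nu$ is a partition with length bounded in terms of $d$, and verify the formula expressing $crank(\la)$ in these terms (this is the ingredient I would cite or re-derive from Hopkins--Sellers--Yee); (2) translate the condition $crank(\la)\ge j$ into the region $\{(d,\mu,\nu) : \text{some linear inequality involving } j\}$; (3) on the mex side, decompose a partition $\la$ with part $j$ and $mex_j(\la)-j$ even by peeling off the even-length run $j,j+1,\dots$ starting at $j$, leaving a partition whose parts avoid that interval, and record the remaining data; (4) match the two decompositions by an explicit, piecewise-linear ``shift and reflect'' map on the triple of parameters — typically conjugating one of the two component partitions and translating indices by $j$ — chosen so that weights agree; (5) check injectivity and surjectivity by exhibiting the inverse, and separately handle the small exceptional weights (the hypothesis ``weight greater than $1$'' mirrors the exception $C(1,1)=-C(0,1)$ in the definition of $C(m,n)$, so the edge cases must be treated by hand).

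The main obstacle, I expect, is step (4): getting the bookkeeping of the two ``free'' partitions $\mu$ and $\nu$ to line up so that the parity condition ``even run above $j$'' on the mex side corresponds exactly to the inequality ``$crank \ge j$'' on the crank side, while keeping the map weight-preserving. The parity constraint naturally wants to pair up two consecutive staircase steps, so the bijection will likely have a ``fold in pairs'' flavour, and one must check that this folding is compatible with the bounds that $d$ imposes on $\mu$ and $\nu$ — i.e. that nothing overflows the Durfee staircase after the shift by $j$. A secondary subtlety is orientation: since $mex_j$ is only defined when $j$ is actually a part, the construction must gracefully handle the boundary between ``$j$ is a part'' and ``$j+1$ is the mex-type value,'' which is where the convention $C(1,1)=-C(0,1)$ and the restriction to weight $>1$ earn their keep. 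Once the map and its inverse are written down, weight-preservation and the parity/inequality correspondence should both reduce to short linear-algebra checks on the parameters $(d,\mu,\nu)$.
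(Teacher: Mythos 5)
Your outline follows the paper's general route in spirit --- peel off the run of consecutive parts $j+1,\dots,j+2k$ sitting above $j$ on the mex side, and compare with a Durfee-type condition on the crank side --- but as written it has two genuine gaps. First, the claim in your steps (1)--(2) that the crank ``is read off as the difference of the lengths'' of the two pieces of the Durfee decomposition, so that $crank(\la)\geq j$ becomes a simple inequality in the parameters, is not correct as stated. What the Hopkins--Sellers--Yee result actually provides (Lemma \ref{lem:crank-frob}) is one-sided: $crank(\la)\leq -j$ if and only if $\omega(\la)\geq d_j^\la+j$, i.e. the Durfee data controls the partitions of crank at most $-j$, through the number of ones. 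There is no comparably clean description of $\{\la: crank(\la)\geq j\}$ in these coordinates; the paper passes from $\C_{\leq -j}$ to $\C_{\geq j}$ only by invoking the crank symmetry $C(m,n)=C(-m,n)$ (Corollary \ref{cor:cranksym}), realized bijectively by the Berkovich--Garvan involution $\Lambda$. Your plan never mentions this symmetry, and without it your step (2) does not go through.

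Second, and more seriously, your step (4) --- which you yourself flag as the main obstacle --- is exactly where the content of the theorem lives, and ``shift and reflect,'' ``fold in pairs,'' or ``conjugate one component and translate by $j$'' is not yet a construction. After peeling, the mex side is $\bigsqcup_{k\geq 0}\{\Delta_{j,2k}\}\times \Pp_{j+2k+1}$, while the Durfee-side target is the set $\F_j$ of partitions with no $\la_i-i$ equal to $j$; no single conjugation-plus-shift maps one onto the other, because the staircase $\Delta_{j,2k}$ must be absorbed into the partition two steps at a time while the forbidden value of $\la_i-i$ descends from $j+2k$ to $j$. The paper achieves this with an \emph{iterated} local move ($\phi_j$ and its inverse $\psi_j$), and a substantial portion of the proof (Propositions \ref{prop:phifinite} and \ref{prop:psifinite}) is devoted to showing that the iteration terminates and lands in the correct set, while preserving weight and all parts at most $j$ (the latter is what lets one restrict to partitions actually containing $j$). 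Nothing in your sketch supplies this map or a substitute for it. Finally, the exceptional small weights are not where you locate them: the hypothesis ``weight greater than $1$'' is needed because of the partitions $\emptyset$ and $(1)$ in the crank-side bijections ($\Gamma_j$ and $\Lambda$), not because of any ambiguity about whether $j$ is a part. So your proposal is a reasonable road map, but the bijection it promises is not constructed, and the reduction of ``crank $\geq j$'' to Durfee data requires the symmetry ingredient you omitted.
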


The case $j=0$ of the above theorem implies Theorem \ref{theo:crank-mex} as $mex_0(\la)=mex(\la)$ and $0$ can be seen as a fictitious part of all partitions.

The aim of this paper is to provide a purely bijective proof Theorem \ref{theo:main}. This generalization will derive from a key result related the Durfee decomposition of partitions.
 

\subsection{Statement of results}

We first extend the generalization of the notion of mex to all partitions.

\begin{deff}\label{def:j-mex}
Let $j\in \Zo$. For $\la\in \Pp$, the $j$-mex of $\la$, denoted $mex_j(\la)$, is the smallest integer greater than $j$ which is not a part of $\la$. 
For example, we have $mex_j(\emptyset)=j+1$ for $j\in \Zo$, 
$$mex_0(5,3,2,2)=1, mex_1(5,3,2,2)=mex_2(5,3,2,2)=mex_3(5,3,2,2)=4, mex_4(5,3,2,2)=6\,,$$ 
and finally $mex_j(5,3,2,2)=j+1$ for $j\geq 5$. Denote by $\M_{j}$ the set of partitions with a $j$-mex parity different from $j$. With the previous examples, we have that $\emptyset\in M_j$ for $j\in \Zo$, and  $$(5,3,2,2)\in \M_0\cap\M_1\cap\overline{\M}_2\cap \M_3 \cap\overline{\M}_4 \cap \bigcap_{j\geq 5}\M_j\,.$$
\end{deff}

In the remainder of the paper, for all $\la\in \Pp$, we  set $\la_0=\infty$ and $\la_{\ell(\la)+1}=0$, so that the sequence $(\la_0,\cdots,\la_{\ell(\la)+1})$ remains non-increasing. A partition then becomes a non-increasing sequence starting from $\infty$ and ending by $0$. For example, the partition $\emptyset$ is associated to the sequence $(\infty,0)$ with $\emptyset_0=\infty$ and $\emptyset_1=0$. For $j\in \Zo$, denote by $\Pp_j$ the set of partitions which do not have  $j$ as part. Conversely, $\overline{\Pp}_j$ is the set of partitions with a part $j$. A rephrasing of Theorem \ref{theo:main} is then the following.

\begin{theo}\label{theo:main2}
For $j\in \Zo$, at fixed weight greater than $1$, the number of partitions in $\M_j\cap \overline{\Pp}_j$ is equal to the number of partitions with crank at least equal to $j$.
\end{theo}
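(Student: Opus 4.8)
The plan is to establish a weight-preserving bijection between $\M_j\cap\overline{\Pp}_j$ and the set $\C_{\geq j}$ of partitions with crank at least $j$, for partitions of any fixed weight $n>1$. The natural tool, as advertised in the abstract, is the Durfee decomposition in the alternative form studied by Hopkins--Sellers--Yee: writing a partition $\la$ as a triple consisting of a staircase-type core (governed by how the parts cross a diagonal), a partition fitting to the right, and a partition fitting below. I would first set up this decomposition carefully, record exactly how it reads off $crank(\la)$, and in particular identify the subset of triples corresponding to $\C_{\geq j}$: crank at least $j$ should translate into a lower bound (by roughly $j$) on the size of the first column/row of the core, or equivalently on the side length of the relevant square. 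The convention $\la_0=\infty$, $\la_{\ell(\la)+1}=0$ introduced just before the statement is a hint that the argument will treat the part $j$ and the ``boundary'' parts uniformly, so I would phrase everything in terms of the extended sequence.

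Next I would analyze the condition $\la\in\M_j\cap\overline{\Pp}_j$ purely combinatorially. Having a part equal to $j$ pins down where in the extended non-increasing sequence the value $j$ sits; then $mex_j(\la)$ is the first ``gap'' strictly above $j$, and requiring $mex_j(\la)-j$ odd (i.e. $mex_j(\la)$ and $j$ of different parity, which is the membership condition for $\M_j$) is a parity constraint on the length of the run of consecutive integers $j, j+1, \dots, mex_j(\la)-1$ all appearing as parts. So a partition in $\M_j\cap\overline{\Pp}_j$ decomposes as: a ``ladder'' of distinct consecutive parts $mex_j(\la)-1, \dots, j+1, j$ of odd length, sitting on top of a partition whose parts are all $<j$, with an arbitrary partition of parts $\geq mex_j(\la)$ stacked above. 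I would make this a clean bijection with triples (top partition with parts $\geq$ some threshold, odd-length consecutive ladder ending at $j$, bottom partition with parts $<j$).

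The heart of the proof is then to match these two families of triples. The odd-length consecutive ladder $j+k-1,\dots,j+1,j$ with $k$ odd has weight $kj+\binom{k}{2}$ and should correspond, under the Durfee-type decomposition, to a square or staircase piece of a controlled size; the parity restriction ``$k$ odd'' is exactly what should make the correspondence with the crank-$\geq j$ side work out, presumably because on the crank side the analogous piece ranges over a set whose cardinality matches (Corollary \ref{cor:cranksym}, the symmetry $C(m,n)=C(-m,n)$, will likely be invoked to convert a two-sided count into a one-sided ``$\geq j$'' count). Concretely I expect an involution or a pairing argument that, on partitions of fixed weight $n$, cancels the contributions of even-length ladders against something, leaving the odd-length ones in bijection with $\C_{\geq j}$; the hypothesis $n>1$ enters because of the exceptional values $C(1,1)=-C(0,1)=1$ flagged after Theorem \ref{theo:Garvan}.

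The main obstacle I anticipate is getting the Durfee/crank dictionary exactly right at the boundary --- precisely pinning how ``$crank(\la)\geq j$'' sits inside the decomposition, handling the two cases $\omega(\la)=0$ and $\omega(\la)>0$ in the crank definition uniformly, and checking that the parity bookkeeping of the consecutive-ladder length lines up with the integer shift of $j$ on the crank side rather than being off by one. Once the decomposition is pinned down and the ladder piece is matched to the correct-size square (with the odd/even dichotomy surviving the matching), the rest should be a direct, if fiddly, verification that the three pieces are transported weight-preservingly.
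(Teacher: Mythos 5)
Your outline correctly identifies the right raw material: a partition in $\M_j\cap\overline{\Pp}_j$ is an odd-length ladder of consecutive parts $j,j+1,\ldots,mex_j(\la)-1$ together with parts $<j$ and parts $\geq mex_j(\la)$; the crank side should be read through a Durfee-type dictionary; crank symmetry (Corollary~\ref{cor:cranksym}) must be invoked to pass from a one-sided bound on one side to the other; and the hypothesis on the weight is tied to the exceptional values $C(\pm1,1)$. But the proof stops exactly where the actual content begins. The ``heart of the proof'' paragraph never constructs the matching: it asserts that the ladder ``should correspond'' to a core piece of controlled size, that the parity restriction is ``exactly what should make the correspondence work out, presumably because\ldots'', and that ``an involution or a pairing argument'' is expected to cancel the even-length ladders. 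None of this is carried out, and it is not a routine verification. In the paper this gap is filled by two separate nontrivial results: first, Theorem~\ref{theo:mainbis}, the equinumerosity of $\M_j$ with the set $\F_j$ of partitions avoiding $j$ among the values $\la_i-i$ (proved either analytically, via the alternating sum $\sum_{k\geq0}(-1)^kq^{k(k+1)/2+jk}$ and the Jacobi triple product, or bijectively by the iterated maps $\Phi_j,\Psi_j$ of Sections~\ref{sec:mainbijection}--\ref{sec:proofmain}, whose termination and invertibility require the careful analysis of Propositions~\ref{prop:phifinite} and \ref{prop:psifinite}); second, Theorem~\ref{theo:crank-frobenius}, the bijection $\Gamma_j$ between $\F_j\cap\overline{\Pp}_j$ and partitions of crank at most $-j$, which rests on the precise crank dictionary of Hopkins--Sellers--Yee (Lemma~\ref{lem:crank-frob}: $crank(\la)\leq-j$ iff $\omega(\la)\geq d_j^\la+j$), a statement about the number of parts equal to $1$ rather than the ``size of the first column of the core'' you gesture at. Your cancellation intuition is in the spirit of the analytic proof of Theorem~\ref{theo:mainbis}, but making it work requires either that generating-function identity or the intricate iteration, and neither appears in your proposal.

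A secondary point: even granting the ladder decomposition, you would still need to say exactly which object the odd-length ladders biject onto and then transport it to $\C_{\geq j}$; in the paper this passage is the composition $\Lambda\circ\Gamma_j\circ\Phi_j$, where $\Lambda$ is the Berkovich--Garvan crank-sign-reversing involution realizing Corollary~\ref{cor:cranksym}. So the architecture you sketch is compatible with the paper's, but the three concrete constructions that make it a proof (the $\M_j\leftrightarrow\F_j$ bijection, the $\F_j\cap\overline{\Pp}_j\leftrightarrow\C_{\leq-j}$ bijection via Lemma~\ref{lem:crank-frob}, and the crank-reversing involution) are all missing, and the first two are the genuinely hard steps.
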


The bijective proof of Theorem \ref{theo:main2} that we provide in this paper, was deeply inspired by the work of Hopkins, Sellers, and Yee, who described in \cite{HSY} combinatorial relations that link the crank and the Durfee decomposition of a partition. Our work is based on a simple yet subtle  definition related the very notion of Durfee decomposition. 

\begin{deff}
Let $\la\in \Pp$. The function $i \mapsto \la_i-i$ is then decreasing on $\{0,\ldots,\ell(\la)+1\}$, with $\la_0-0=\infty$ and $\la_{\ell(\la)+1}-(\ell(\la)+1)<0$. Therefore, for $j\in \Zo $, there exists a unique integer $d_j^\la\in \{0,\ldots,\ell(\la)\}$ such that $\la_{d_j^\la}-d_j^\la\geq j >\la_{d_j^\la+1}-(d_j^\la+1)$.  Formally written, 
\begin{equation}\label{eq:dj}
d_j^\la = \max \{i\in\{0,\ldots,\ell(\la)\}: \la_i-i\geq j\}\,.
\end{equation}
For example, $d_j^\emptyset=0$, and for $\la=(5,3,2,2)$,
$$d_0^\la = d_1^\la = 2, \, d_2^\la = d_3^\la = d_4^\la = 1 \text{ and }d_j^\la = 0 \text{ for }j\geq 5\,.$$
We also denote by $\F_j$ the set of partitions $\la$ such that $j\notin \{\la_i-i: i \in \{1,\ldots,\ell(\la)\}\}$, which is equivalent to saying that $\la_{d_j^\la}-d_j^\la> j >\la_{d_j^\la+1}-(d_j^\la+1)$. Conversely, $\overline{\F}_j$ is the set of partitions $\la$ satisfying $\la_{d_j^\la}=d_j^\la+ j$.
\end{deff}

\begin{rem}\label{rem:dj}
For $j\in \Zo$, as $\la_{d_{j+1}^\la+2}-(d_{j+1}^\la+2)<j<\la_{d_{j+1}^\la}-d_{j+1}^\la$ when $d_{j+1}^k<\ell(\la)$, we always have that $d_{j}^\la-d_{j+1}^\la\in \{0,1\}$. Therefore, the sequence $(d_j^\la)_{j\geq 0}$ is non-increasing while the sequence $(d_j^\la+j)_{j\geq 0}$ is non-decreasing.
\end{rem}

\begin{rem}\label{rem:durfee}
The Durfee decomposition of a partition $\la=(\la_1,\ldots,\la_{\ell(\la)})$ is the triplet $(d_0^\la,\mu,\nu)$ with $(\mu,\nu)=[(\mu_1,\ldots,\mu_{d_0^\la}),(\nu_1,\ldots,\nu_{d_0^\la})]$ such that, for all $i\in \{1,\ldots,d_0^\la\}$, $\mu_i=\la_i-i$ and $\nu_i = \sharp\{u\in\{1,\ldots,\ell(\la)\}: \la_u\geq i\} - i$. Inversely, for any triplet $(t,\mu,\nu)$ such that $\mu$ and $\nu$ are increasing sequences of $t$ non-negative integers, we associate the partition $\la$ with length $\nu_1+1$ and 
$$
\begin{cases}
\la_i=\mu_i+i \ \ \ \ \ \ \ \ \ \ \ \ \ \ \ \text{for} \ \ 1\leq i\leq t \\
\la_i=\sharp\{u: \nu_u+u\geq i\} \ \ \text{for} \ \ t+1\leq i\leq \nu_1+1\,. 
\end{cases}
$$
We note $\la \equiv (t,\mu,\nu)$, and one can check that $|\la|=t+\sum_{u=1}^t \mu_u+\nu_u$. In particular, $\emptyset\equiv(0,\emptyset,\emptyset)$.

Observe that, for $j\in \Zo$, $\F_j$ can be equivalently defined as the set of partitions $\la\equiv (d_0^\la,\mu,\nu)$ such that $j$ is not in $\mu$.
\end{rem}

We now provide an intermediate result that plays a fundamental role in the bijective proof of Theorem \ref{theo:main}. 
 
\begin{theo}\label{theo:mainbis}
For $j\in \Zo$, at fixed weight, the number of partitions of $\M_{j}$ is equal to the number of partitions in $\F_j$.
\end{theo}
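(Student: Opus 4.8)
The plan is to construct an explicit weight-preserving bijection $\Phi_j\colon \M_j \to \F_j$. The key conceptual tool is the sequence of Durfee-type indices $(d_i^\la)_{i\geq 0}$ and the associated first coordinates $\la_i - i$: a partition lies in $\overline{\F}_j$ precisely when $j$ appears among the values $\{\la_i - i : 1 \le i \le \ell(\la)\}$, equivalently (by \Rem{rem:durfee}) when $j$ is a part of the sequence $\mu$ in its Durfee decomposition. So I want to understand how membership in $\M_j$ — a condition on the parity of $mex_j(\la) - j$ — translates into information about the ``staircase values'' $\la_i - i$ near level $j$, and then use that information to toggle whether $j$ itself occurs as such a value.

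First I would unwind the definition of $mex_j$. If $j$ is the largest part-value below $mex_j(\la)$ that actually occurs (or $j$ itself is a part), then $j, j+1, \ldots, mex_j(\la)-1$ are arranged in a predictable way; more precisely I expect that $mex_j(\la)$ being the smallest integer $>j$ missing as a part controls a run of consecutive integers $j+1, \dots, mex_j(\la)-1$ all appearing as parts, and the parity of the length of that run is exactly $mex_j(\la) - j - 1 \bmod 2$. The condition ``$mex_j$-parity different from $j$'', i.e. $mex_j(\la) - j$ odd, is then a statement about whether this run of consecutive parts has even or odd length. Next, I would relate this run of consecutive parts at the ``top'' of the partition near value $j$ to the behaviour of the map $i \mapsto \la_i - i$ crossing level $j$: each time a part-value is repeated, $\la_i - i$ drops by more than $1$; each time consecutive part-values occur once each, it drops by exactly... and the presence or absence of $j$ in the image set $\{\la_i-i\}$ is governed by whether $d_j^\la = d_{j+1}^\la$ or $d_j^\la = d_{j+1}^\la + 1$ together with the exact value $\la_{d_j^\la} - d_j^\la$.

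Having set up this dictionary, I would define the bijection by pairing up, in a weight-preserving way, the partitions where $j \in \{\la_i - i\}$ but $mex_j(\la)-j$ is even (these must be removed from $\F_j$ and matched into $\M_j \cap \F_j$... wait, more carefully: I need a bijection from $\M_j$ to $\F_j$, and on the overlap $\M_j \cap \F_j$ it can be the identity, so the real content is a bijection $\M_j \cap \overline{\F}_j \to \overline{\M}_j \cap \F_j$). On $\M_j \cap \overline{\F}_j$ we have $\la_{d_j^\la} = d_j^\la + j$ (so $j$ does occur as a staircase value) and $mex_j(\la)-j$ odd; on $\overline{\M}_j \cap \F_j$ we have $j$ not a staircase value and $mex_j(\la)-j$ even. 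The natural move is a local surgery near rows $d_j^\la$ and $d_{j+1}^\la$: either delete the cell(s) making $j$ a staircase value and redistribute them so as to flip the parity of the consecutive-parts run, or conversely insert them. I would make this precise by working with the Durfee-decomposition triple $(t,\mu,\nu)$: removing $j$ from $\mu$ (or inserting it) changes the weight by a controlled amount, which I then compensate by a corresponding adjustment to $\nu$ or to the tail of $\la$, chosen exactly so as to produce the required parity change of $mex_j$.

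The main obstacle I anticipate is verifying that this local surgery is genuinely involutive-up-to-pairing and stays within the prescribed sets — in particular, checking that flipping the occurrence of $j$ as a staircase value does not accidentally change $d_i^\la$ or the staircase values at \emph{other} levels $i \ne j$ in a way that breaks the correspondence, and confirming the parity of $mex_j$ flips in exactly the right direction. Handling the boundary cases (when $d_j^\la = 0$, when the run of consecutive parts is empty, when $\la = \emptyset$, and the weight-$\le 1$ degeneracies) will require care. I would organize the proof by first establishing the staircase/$mex_j$ dictionary as a standalone lemma, then defining $\Phi_j$ and its inverse explicitly on the triple $(t,\mu,\nu)$, and finally checking weight-preservation and the set memberships case by case.
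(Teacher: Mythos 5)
Your setup is fine: the staircase dictionary ($\la\in\overline{\F}_j$ iff $\la_{d_j^\la}=d_j^\la+j$, and $mex_j(\la)-j-1$ is the length of the run of consecutive parts $j+1,\ldots,mex_j(\la)-1$), and the reduction to a weight-preserving bijection $\M_j\cap\overline{\F}_j\to\overline{\M}_j\cap\F_j$ (identity on the overlap) is logically valid. But the heart of the proof --- the actual construction of that bijection --- is missing, and the mechanism you envision, a single ``local surgery near rows $d_j^\la$ and $d_{j+1}^\la$'' that deletes/inserts the cells making $j$ a staircase value and compensates so as to flip the parity of the mex run, does not work as described. Two concrete obstructions. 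First, removing one occurrence of $j$ as a staircase value need not remove it at all: if the part $d_j^\la+j$ has multiplicity $m>1$ (e.g.\ $j=0$, $\la=(2,2,2)$, where $d_0^\la=2$ and $\la_2=\la_3=2$), then after any one-cell adjustment $j$ is still a staircase value, and any correct map must iterate a basic move on the order of $m$ times; at weight $6$ the three partitions of $\M_0\cap\overline{\F}_0$, namely $(4,2),(2,2,2),(2,2,1,1)$, must be matched with $(5,1),(4,1,1),(3,1,1,1)$, and no assignment of $(2,2,2)$ is a one-cell toggle near the Durfee rows. Second, the mex condition is not local either: a partition in $\M_j$ may contain the full run $j+1,\ldots,j+2k$ with $k$ arbitrarily large, sitting among the small parts far from row $d_j^\la$, and converting it into a member of $\F_j$ of the same weight forces a global rearrangement. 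This is precisely where the paper's work lies: it identifies $\M_j$ with $\bigsqcup_{k\geq 0}\{\Delta_{j,2k}\}\times\Pp_{j+2k+1}$ by stripping off the staircase $\Delta_{j,2k}=(j+2k,\ldots,j+1)$, defines a two-case elementary move $\phi_j$ (one case absorbs an occurrence of the staircase value, the other peels two parts off $\Delta_{j,2k}$), and then proves termination and invertibility of the iteration via Propositions \ref{prop:phipsi}, \ref{prop:phifinite} and \ref{prop:psifinite}; in particular the number of iterations is governed by the multiplicity of the part $d^\la_{j+2k}+j+2k$ and by $k$, and the resulting bijection is \emph{not} the identity on $\M_j\cap\F_j$ once $mex_j(\la)-j\geq 3$. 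Your proposal stops exactly where this difficulty begins, so as it stands it is a plan with the decisive step unspecified, and in its natural single-move reading it fails on the examples above.
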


\begin{cor}\label{cor:mainbis}
For $j\in \Zo$, at fixed weight, the number of partitions of $\overline{\M}_{j}$ is equal to the number of partitions in $\overline{\F}_j$.
\end{cor}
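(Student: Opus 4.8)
The plan is to prove Theorem~\ref{theo:mainbis} by constructing an explicit weight-preserving bijection $\Phi_j\colon \M_j \to \F_j$, and then deduce Corollary~\ref{cor:mainbis} formally. The corollary itself requires almost no work once the theorem is in hand: at any fixed weight $n$, the set $\Pp_n$ of partitions of $n$ is finite, and it is the disjoint union $\M_j \sqcup \overline{\M}_j$ on one side and $\F_j \sqcup \overline{\F}_j$ on the other. Since Theorem~\ref{theo:mainbis} gives $\sharp(\M_j\cap \Pp_n) = \sharp(\F_j \cap \Pp_n)$, subtracting both sides from the common total $\sharp \Pp_n$ yields $\sharp(\overline{\M}_j\cap \Pp_n) = \sharp(\overline{\F}_j\cap \Pp_n)$, which is exactly the claim. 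So the real content is entirely in the theorem, and the corollary is a one-line complementation argument.

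For the theorem, the first step is to understand what distinguishes $\M_j$ from $\F_j$ at the level of the sequence $i\mapsto \la_i - i$. A partition $\la$ lies in $\overline{\F}_j$ precisely when $j$ appears among the values $\{\la_i - i : 1\le i\le \ell(\la)\}$, equivalently (by Remark~\ref{rem:durfee}) when $j$ appears in the $\mu$-part of the Durfee-type decomposition relative to threshold $j$. On the other side, membership in $\M_j$ is governed by the parity of $mex_j(\la)-j$: writing $k = d_j^\la$, one checks that $mex_j(\la)$ is the smallest integer $>j$ not appearing as a part, and its relationship to $j$ is controlled by how the parts of $\la$ that are $\ge j+1$ stack up just above level $j$. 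The key structural observation I would isolate is a local description near the ``corner'' at level $j$: the behaviour of $\la$ in the rows indexed around $d_j^\la$ and $d_{j+1}^\la$, together with how many consecutive integers starting from $j+1$ are parts of $\la$, simultaneously encodes both the $\F_j$/$\overline\F_j$ dichotomy and the $\M_j$/$\overline\M_j$ dichotomy.

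The second and main step is to build the bijection. I expect it to act as the identity on all but a small ``local block'' of the partition near level $j$, and to reconfigure that block so as to trade the presence or absence of $j$ in the sequence $(\la_i-i)$ against the parity of $mex_j(\la)-j$. Concretely, I would split $\M_j$ according to $d_j^\la$ versus $d_{j+1}^\la$ (using Remark~\ref{rem:dj}, this difference is $0$ or $1$) and according to whether $\la\in \F_j$ or $\overline\F_j$; on the sub-part $\M_j\cap \F_j$ the map is the identity, and on $\M_j\cap\overline\F_j$ I would push or pull a suitable staircase of parts just above level $j$, adjusting the multiplicity of parts in the range $[j+1, mex_j(\la)-1]$, to land in $\F_j\cap \overline{\M}_j$—and simultaneously the inverse must move $\F_j\cap\overline{\M}_j$ back. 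The weight must be preserved throughout, which forces the staircase reshuffle to be length-neutral in total cell count; this is the bookkeeping one has to get exactly right.

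The main obstacle, I expect, is verifying that the local rearrangement is genuinely a well-defined involution-like bijection: one must check that the target partition is still non-increasing (the delicate inequalities are precisely those defining $d_j^\la$ and the ``$>j$ versus $=j$'' distinction in the definitions of $\F_j$ and $\overline\F_j$), that it lands in the correct set ($\F_j$ versus $\overline{\M}_j$ matching up), and that applying the construction a second time—or the explicitly described inverse—returns the original partition. In particular, the parity of $mex_j(\la)-j$ changing exactly when we cross the $j$-in-$\mu$ boundary is the crux, and tracking it through the staircase surgery, including the degenerate cases $\la=\emptyset$, $d_j^\la=0$, or $mex_j(\la)=j+1$, is where the care is needed. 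Once the bijection and its inverse are laid out with these case checks, Theorem~\ref{theo:mainbis} follows, and Corollary~\ref{cor:mainbis} is immediate by the complementation argument above.
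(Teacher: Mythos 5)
Your complementation argument for the corollary itself is sound: at fixed weight $n$ the finitely many partitions of $n$ decompose both as $\M_j\sqcup\overline{\M}_j$ and as $\F_j\sqcup\overline{\F}_j$, so subtracting the identity of Theorem~\ref{theo:mainbis} from the common total gives the complementary identity; since Theorem~\ref{theo:mainbis} is established independently in the paper, quoting it is legitimate. This is, however, a genuinely different route from the paper's: in Section~\ref{sec:corbijection} the corollary receives a direct bijective proof, by first mapping $\overline{\F}_j$ weight-preservingly onto $\{\Delta_{j,1}\}\times\F_{j+1}$ (delete the part $\la_{d_j^\la}=d_j^\la+j$, add $1$ to the $d_j^\la-1$ larger parts, and record the staircase $(j+1)$), identifying $\overline{\M}_j$ with $\bigsqcup_{k\geq 0}\{\Delta_{j,2k+1}\}\times\Pp_{j+2k+2}$, and then rerunning the machinery of $\Phi_j,\Psi_j$ with the even staircases $\Delta_{j,2k}$ replaced by the odd ones $\Delta_{j,2k+1}$. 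Your route buys a one-line deduction of the counting statement; the paper's route buys an explicit bijection, in keeping with its stated goal of a purely bijective treatment. A caution on the remainder of your text: the sketched re-proof of Theorem~\ref{theo:mainbis} is not needed for the corollary and, as written, is not a proof --- the ``staircase surgery'' and its verification are precisely the deferred content --- and its proposed design does not reflect how the paper's bijection actually behaves: $\Phi_j$ is not the identity on $\M_j\cap\F_j$ (for instance $\Phi_0((4,4,2,1))=(5,4,1,1)$) and is not a single local move, but an iterated sequence of moves on pairs $(\Delta_{j,2k},\cdot)$ whose termination needs a separate argument (Proposition~\ref{prop:phifinite}). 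Finally, your notation $\Pp_n$ for the partitions of $n$ clashes with the paper's $\Pp_j$, which denotes partitions avoiding the part $j$.
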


Using the following reformulation of a result provided by Hopkins, Sellers and Stanton in \cite{HSS22}, we derive the generalization of Theorem \ref{theo:crank-mex} from Theorem \ref{theo:mainbis} and Corollary \ref{cor:cranksym}. 

\begin{theo}\label{theo:crank-frobenius}
Let $j\in \Zo$. Then, at fixed weight greater than $1$, there are as many partitions in $\F_j\cap \overline{\Pp}_j$ as partitions with crank at most equal to $-j$.
\end{theo}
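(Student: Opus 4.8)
The plan is to prove \Thm{theo:crank-frobenius} by connecting the set $\F_j \cap \overline{\Pp}_j$ to crank statistics through the Durfee decomposition, following the blueprint laid out by Hopkins, Sellers and Stanton in \cite{HSS22} but recast in the language of the sequence $(d_j^\la)_{j\geq0}$ and the $\F_j$-sets introduced above. The starting point is \Rem{rem:durfee}: a partition $\la \equiv (t,\mu,\nu)$ with $t = d_0^\la$ lies in $\F_j$ exactly when $j$ is \emph{not} among the entries of $\mu$, i.e.\ $j \notin \{\la_i - i : 1\le i\le \ell(\la)\}$. The plan is to translate the two extra conditions — having a part equal to $j$ (so $\la \in \overline{\Pp}_j$) and crank at most $-j$ — into conditions on the Frobenius-type data $(t,\mu,\nu)$ and then exhibit a weight-preserving bijection, or an equality of generating functions, between the two resulting families.

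First I would make precise the crank side. Using \Thm{theo:Garvan}, or rather the combinatorial description of the crank via the Durfee/Frobenius decomposition from \cite{HSY}, I would express the number of partitions of a given weight $n>1$ with crank $\le -j$ in terms of the conjugate picture: crank at most $-j$ corresponds, after conjugation, to crank at least $j$, and by the results of \cite{HSY} this is governed by the position where the staircase $\la_i - i$ crosses the level $j$ — precisely the index $d_j^\la$. So the combinatorial content is that "crank $\le -j$" should match, under conjugation, a statement about $d_j^{\la'}$ versus $d_0^{\la'}$, which is exactly the kind of condition that distinguishes $\F_j$ from $\overline{\F}_j$. The key step is to verify that conjugation interchanges the pair (``$\la$ has a part $j$'', "crank $\le -j$") with the pair defining $\F_j \cap \overline{\Pp}_j$; concretely, since conjugation swaps $\mu \leftrightarrow \nu$ in the Frobenius symbol and sends $crank$ to $-crank$ on the relevant range, the condition "$j \notin \mu$" becomes "$j \notin \nu$", and I must check that "$j \notin \nu$ together with having a part $j$" is equivalent to the crank inequality. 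This matching is the heart of the argument and where I would lean directly on the reformulation already available in \cite{HSS22}.

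The second block of work is bookkeeping at the boundary. The hypothesis "weight greater than $1$" is present precisely because of the anomaly $C(1,1) = -C(0,1) = 1$ in the definition of $C(m,n)$ and the correction term $(x-1)q$ in \Thm{theo:Garvan}; for $n=1$ the partition $(1)$ would be miscounted, so I would explicitly check the cases of small weight (really just $n=1$, and confirm $n=0$ is vacuous or handled separately) to be sure the stated equality holds for every $n>1$ with no off-by-one error, and note where the fictitious parts $\la_0 = \infty$, $\la_{\ell(\la)+1}=0$ enter when $j=0$. I would also record the special role of $j=0$: there $\overline{\Pp}_0$ is all of $\Pp$ (every partition has the fictitious part $0$, or one argues directly), so the statement reduces to "$\#\F_0 = \#\{\text{crank} \le 0\}$ at fixed weight $>1$", which should follow from \Thm{theo:mainbis} with $j=0$ combined with \Cor{cor:cranksym}.

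I expect the main obstacle to be pinning down the exact dictionary between the crank inequality $crank(\la)\le -j$ and the Frobenius/Durfee data — in particular being careful about the two cases in the definition of $crank$ (the $\omega(\la)=0$ case versus $\omega(\la)>0$) and about whether the relevant statistic is $d_j^{\la}$ itself or $d_j^{\la} - d_0^{\la}$, since \Rem{rem:dj} shows $(d_j^\la)_j$ is only weakly decreasing and the ``jump'' locations are what encode membership in $\F_j$ versus $\overline{\F}_j$. Once that dictionary is fixed, the remainder is a direct invocation of the \cite{HSS22} reformulation together with conjugation-symmetry of the crank (\Cor{cor:cranksym}), plus the small-weight check above. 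I would therefore structure the write-up as: (1) state the \cite{HSS22} result in the $\F_j$-language; (2) apply conjugation to move from "crank $\ge j$" to "crank $\le -j$" and from $\mu$-conditions to $\nu$-conditions; (3) dispatch the weight-one boundary case; (4) conclude.
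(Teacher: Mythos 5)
There is a genuine gap, and it sits exactly at what you yourself call the heart of the argument. Your dictionary pivots on the claim that conjugation swaps the Frobenius columns $\mu\leftrightarrow\nu$ \emph{and} sends $crank$ to $-crank$ ``on the relevant range''. The first half is true, the second is not: for example $\la=(3,1)$ has $crank(\la)=0$ while its conjugate $(2,1,1)$ has $\omega=2$, $\eta=0$, hence crank $-2$. The symmetry $C(m,n)=C(-m,n)$ of Corollary \ref{cor:cranksym} is \emph{not} realized by conjugation but by the Berkovich--Garvan involution of Section \ref{sec:cor}, which does not act in any simple way on the Frobenius symbol, so the equivalence you need (``$j\notin\nu$ and $\la$ has a part $j$'' versus ``$crank(\la)\le -j$'') is never established; you propose to ``lean directly'' on \cite{HSS22} for it, but that result \emph{is} Theorem \ref{theo:crank-frobenius}, so this step is either missing or circular relative to what is to be proved. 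The same issue affects your $j=0$ shortcut: deducing $\sharp\F_0=\sharp\,\C_{\leq 0}$ from Theorem \ref{theo:mainbis} requires the crank--mex identity (Theorem \ref{theo:crank-mex}), which in this paper is downstream of the very theorem you are proving. Finally, your boundary discussion misses the actual reason for ``weight $>1$'': it is the single exceptional partition $(1)$ (for which $\Gamma_j$-type maps would collide with $\emptyset$-type degeneracies), not the bookkeeping convention $C(1,1)=-C(0,1)=1$, which concerns generating functions rather than the combinatorial count.

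The paper's route supplies precisely the dictionary you were looking for, and it does not involve conjugation. The key is Lemma \ref{lem:crank-frob} (Hopkins--Sellers--Yee): $crank(\la)\le -j$ if and only if $\omega(\la)\ge d_j^\la+j$, proved in a few lines from the two-case definition of the crank. Granting this, one writes an explicit weight-preserving bijection $\Gamma_j$ from $\F_j\cap\overline{\Pp}_j$ (weight $>1$) to $\C_{\leq -j}$: delete one part equal to $j$, subtract $1$ from each of the $d_j^\la$ largest parts, and append $d_j^\la+j$ parts equal to $1$; one checks that $d_j$ is unchanged, that the image has at least $d_j+j$ ones (hence crank $\le -j$ by the lemma), and that the procedure is reversible because $\ell(\mu)\ge 2d_j^\mu+j$ forces enough trailing ones to peel off. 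If you want to repair your write-up, replace the conjugation step by this lemma (or an equivalent characterization of $crank(\la)\le -j$ in terms of $d_j^\la$ and $\omega(\la)$) and then give the explicit map; as it stands, your plan proves nothing beyond what it assumes.
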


By adding a part $j$ to the partitions in $\M_j$ and $\F_j$ when $j>0$, Theorem \ref{theo:crank-frobenius} implies that, at fixed weight, there are as many partitions in $\M_j\cap\overline{\Pp}_j$ as partitions in $\F_j\cap\overline{\Pp}_j$. Then, by Theorem \ref{theo:mainbis}, at fixed weight greater than $1$, there are as many partitions in $\M_j\cap\overline{\Pp}_j$ as partitions with crank at most equal to $-j$. We finally obtain Theorem \ref{theo:main} from Corollary \ref{cor:cranksym}.

The remainder of the paper is organized as follows. We first provide in Section \ref{sec:analyticproof} a simple analytic proof of Theorem \ref{theo:mainbis}. Then, in Sections \ref{sec:mainbijection} and \ref{sec:proofmain}, a bijection for Theorem \ref{theo:mainbis} is given, so as the proof of its well-definedness. After that, in Section \ref{sec:corbijection}, we give a direct bijective proof of Corollary \ref{cor:mainbis} in the spirit of the bijection of Section \ref{sec:mainbijection}.
In Section \ref{sec:crank-frobenius}, Theorem \ref{theo:crank-frobenius} is proved bijectively. Finally, in Section \ref{sec:conclusion}, we provide the full scope of bijective proof of Theorem \ref{theo:main} with a bijection for Corollary \ref{cor:cranksym} given in Section \ref{sec:cor}.   


\section{Analytic proof of Theorem \ref{theo:mainbis}}\label{sec:analyticproof}

For $j,k\in \Zo$, set $\Delta_{j,k} = (j+k,\ldots,j+1)$ the partition consisting of $k$ consecutive integers ending by $j+1$, and  $\Delta_{j,0}=\emptyset$. Then, $|\Delta_{j,k}| = \frac{k(k+1)}{2}+jk$.
Hence, by Definition \ref{def:j-mex}, the set of partitions with $j$-mex equal to $j+k+1$ can be associated to $\{\Delta_{j,k}\}\times \Pp_{j+k+1}$, i.e. $mex_j(\la)=j+k+1$ if and only if there exists a unique partition, without a part $j+k+1$, whose parts are exactly those of $\la$  except once the parts $j+1,\ldots,j+k$.
We then have 
$$
\sum_{\la\in \Pp} x^{mex_j(\la)}y^{\ell(\la)}q^{|\la|} = \frac{x^{j+1}}{(qy;q)_\infty}\sum_{k\geq 0} (xy)^{k}(1-yq^{j+k+1})q^{|\Delta_{j,k}|}\,.
$$ 
Using the above relation with $x=y=1$ and the sum run over $k$ even, we have
\begin{align*}
\sum_{\la \in \M_{j}} q^{|\la|} &= \frac{1}{(q;q)_\infty}\sum_{k\geq 0} (1-q^{j+2k+1})\cdot q^{|\Delta_{j,2k}|}\\
&= \frac{1}{(q;q)_\infty}\sum_{k\geq 0} (-1)^k q^{|\Delta_{j,k}|}\\
&= \frac{1}{(q;q)_\infty}\sum_{k\geq 0} (-1)^k q^{\frac{k(k+1)}{2}+jk}\,.
\end{align*}
Also, by the Jacobi triple product, 
$$(-x;q)_\infty (-x^{-1}q;q)_\infty=\frac{1}{(q;q)_\infty}\cdot\sum_{k\in \mathbb{Z}} (-1)^k x^k q^{\frac{k(k-1)}{2}}\,,$$
and by Remark \ref{rem:durfee},
\begin{align*}
\sum_{\la \in \F_j} q^{|\la|} &= [x^0]\left[(-x;q)_\infty (-x^{-1}q;q)_\infty\right]\cdot\frac{1}{(1+xq^j)}\\
&= [x^0]\left[\frac{1}{(q;q)_\infty}\cdot\sum_{k\in \mathbb{Z}} (-1)^k x^k q^{\frac{k(k-1)}{2}}\right]\cdot \left(\sum_{k\geq 0} (-1)^kx^kq^{jk}\right)\\
&= \frac{1}{(q;q)_\infty}\cdot \sum_{k\geq 0} (-1)^k q^{\frac{k(k+1)}{2}+kj}\,.
\end{align*}
In conclusion, 
$\sum_{\la \in \M_j} q^{|\la|}=\sum_{\la \in \F_j} q^{|\la|}$.


\section{Bijection for Theorem \ref{theo:mainbis}}\label{sec:mainbijection}

In this section, we provide two inverse maps for the bijective proof of Theorem \ref{theo:mainbis}. In our bijections, we will apply some transformations on pairs of partitions in $ \bigsqcup_{k\geq 0} \{\Delta_{j,k}\}\times \Pp$. One has to keep in  mind that $\Pp$ can  be trivially associated to $\{\Delta_{j,0}\}\times \Pp = \{\emptyset\}\times \Pp$. Finally, we identify $\M_j$ to $ \bigsqcup_{k\geq 0} \{\Delta_{j,2k}\}\times \Pp_{j+2k+1}$ and $\F_j$ to $\{\Delta_{j,0}\}\times \F_j$. Two maps will then be constructed,
 $$\Phi_j : \bigsqcup_{k\geq 0} \{\Delta_{j,2k}\}\times \Pp_{j+2k+1}\to \{\Delta_{j,0}\}\times \F_j$$
 and 
 $$\Psi_j : \{\Delta_{j,0}\}\times \F_j \to \bigsqcup_{k\geq 0} \{\Delta_{j,2k}\}\times \Pp_{j+2k+1}\,,$$
in such a way that the partitions in the pairs have their weight and their length sums conserved during the process. By abuse of notation, if $\la\in \M_j$ is identified to the pair $(\Delta_{j,2k},\mu)$, we write $\Phi_j(\la)=\nu\in \F_j$ such that $\Phi_j((\Delta_{j,2k},\mu))=(\Delta_{j,0},\nu)$. The same convention stands for $\Psi_j:\F_j\mapsto \M_j$. 

\subsection{From $\bigsqcup_{k\geq 0} \{\Delta_{j,2k}\}\times \Pp_{j+2k+1}$ to $\{\Delta_{j,0}\}\times \F_j$} \label{sec:Phi}

Let $\phi_j$ be the map defined on 
$$\left(\bigsqcup_{k\geq 0} \{\Delta_{j,2k}\}\times \Pp\right)\setminus \left(\{\Delta_{j,0}\}\times \F_{j}\right)=\left(\bigsqcup_{k\geq 1} \{\Delta_{j,2k}\}\times \F_{j+2k}\right) \sqcup \left(\bigsqcup_{k\geq 0} \{\Delta_{j,2k}\}\times \overline{\F}_{j+2k}\right) $$ as follows.

\begin{enumerate}

\item For $k\geq 0$ and $(\Delta_{j,2k},\la)\in \{\Delta_{j,2k}\}\times \overline{\F}_{j+2k}$, do the transformation 
$$\la_1,\ldots,\la_{d_{j+2k}^\la}  \mapsto \la_1+1,\ldots,\la_{d_{j+2k}^\la-1}+1, 1+j+2k\,.$$
This means that, in $\la$, we delete the part $\la_{d_{j+2k}^\la}=d_{j+2k}^\la+j+2k$, add $1$ to the $d_{j+2k}^\la-1$ largest \textit{finite} parts and add a part $1+j+2k+1$. We then obtain a partition $\mu$, and set $\phi_j((\Delta_{j,2k},\la))=(\Delta_{j,2k},\mu)$. Observe that $|\mu|=|\la|$ and $\ell(\mu)=\ell(\la)$, so that the weight and length sums are conserved. Moreover, the transformation does not involved parts less than $1+j+2k$, so that the parts at most equal to $j$ are conserved.

\item For $k\geq 1$ and $(\Delta_{j,2k},\la)\in \{\Delta_{j,2k}\}\times \F_{j+2k}$, do the transformation 
$$\Delta_{j,2k}, \la_1,\ldots,\la_{d_{j+2k}^\la}\mapsto\Delta_{j,2k-2}, \la_1-1,\ldots,\la_{d_{j+2k}^\la}-1,d_{j+2k}^\la+j+2k,j+2k-1\,.$$
This means that, in $\Delta_{j,2k}$, we deleted the parts $j+2k,j+2k-1$, and in $\la$, we subtract $1$ to  the $d_{j+2k}^\la$ largest finite parts and add the parts $d_{j+2k}^\la+j+2k$ and $j+2k-1$ to obtain a partition $\mu$. We finally set $\phi_j((\Delta_{j,2k},\la))=(\Delta_{j,2k-2},\mu)$. Note that $|\mu|=|\la|+(j+2k)+(j+2k-1)$, and $\ell(\mu)=\ell(\la)+2$, so that the weight and length sums are conserved. In addition, The parts at most equal to $j$ are conserved.

\end{enumerate}

For all $(\Delta_{j,2k},\la)\in \{\Delta_{j,2k}\}\times \Pp_{j+2k+1}$, 
iterate the map $\phi_j$ as long  as it is possible. The iteration stops as soon as we reach a pair in $\{\Delta_{j,0}\}\times \F_j$. We then set $\Phi_j((\Delta_{j,2k},\la)) = \phi_j^{u}((\Delta_{j,2k},\la)) \in \{\Delta_{j,0}\}\times \F_j$. We finally observe that the transformations occur only on parts greater than $j$. Therefore, if $\Phi_j$ is well-defined, it then induces a map from $\bigsqcup_{k\geq 0} \{\Delta_{j,2k}\}\times\left(\Pp_{j+2k+1}\cap\overline{\Pp}_j\right)$ to $\{\Delta_{j,0}\}\times \left(\F_j\cap \overline{\Pp}_j\right)$. 

\begin{ex}
For $j,k\in\Zo$, 
$$\Phi_j((\Delta_{j,2k},\emptyset))=(\Delta_{j,0},\Delta_{j,2k})\,,$$
as $\phi_j^u((\Delta_{j,2k},\emptyset))=(\Delta_{j,2k-2u},\Delta_{j+2k-2u,2u})$ for $0\leq u\leq k$.
\end{ex}

\begin{ex}\label{ex:main}
Consider the partition $(11,8,7,7,5,5,4,3,2,2)$. It belongs to $\M_j$ for $j\neq 2,4,7,10$, and the corresponding pairs are respectively
\small{
$$(\Delta_{0,0},(11,8,7,7,5,5,4,3,2,2)),(\Delta_{1,4},(11,8,7,7,5,2)),(\Delta_{3,2},(11,8,7,7,5,3,2,2))\,,$$
$$(\Delta_{5,0},(11,8,7,7,5,5,4,3,2,2)),(\Delta_{6,2},(11,7,5,5,4,3,2,2)),(\Delta_{j,0},(11,8,7,7,5,5,4,3,2,2)) \text{ for } 10 \neq j\geq 8\,.$$
}
We now represent in tables the different iterations of each $\phi_j$ for $j\in\{ 0,1,3,5\}$.
For $j=0$, 
$$
\begin{array}{|c|c|c|c|c|c|}
\hline 
\text{Iteration}&k&\la&d_{2k}^\la&\text{case}&\phi_0(\la)\\
\hline\hline
1&0&(11,8,7,7,5,5,4,3,2,2)&5&(1)&(12,9,8,8,5,4,3,2,2,1)\\
2&0&(12,9,8,8,5,4,3,2,2,1)&5&(1)&(13,10,9,9,4,3,2,2,1,1)\\
3&0&(13,10,9,9,4,3,2,2,1,1)&4&-&-\\
\hline
\end{array}
$$
and \small{$\Phi_0(\Delta_{0,0},(11,8,7,7,5,5,4,3,2,2))= \phi_0^2(\Delta_{0,0},(11,8,7,7,5,5,4,3,2,2))= (\Delta_{0,0},(13,10,9,9,4,3,2,2,1,1))$}. For $j=1$, 
$$
\begin{array}{|c|c|c|c|c|c|}
\hline 
\text{Iteration}&k&\la&d_{1+2k}^\la&\text{case}&\phi_1(\la)\\
\hline\hline
1&2&(11,8,7,7,5,2)&2&(2)&(10,7,7,7,7,5,4,2)\\
2&1&(10,7,7,7,7,5,4,2)&4&(1)&(11,8,8,7,5,4,4,2)\\
3&1&(11,8,8,7,5,4,4,2)&4&(1)&(12,9,9,5,4,4,4,2)\\
4&1&(12,9,9,5,4,4,4,2)&3&(2)&(11,8,8,6,5,4,4,4,2,2)\\
5&0&(11,8,8,6,5,4,4,4,2,2)&4&-&-\\
\hline
\end{array}
$$
and $\Phi_1((\Delta_{1,4},(11,8,7,7,5,2)))= \phi_1^4((\Delta_{1,4},(11,8,7,7,5,2)))= (\Delta_{1,0},(11,8,8,6,5,4,4,4,2,2))$. For $j=3$
$$
\begin{array}{|c|c|c|c|c|c|}
\hline 
\text{Iteration}&k&\la&d_{3+2k}^\la&\text{case}&\phi_3(\la)\\
\hline\hline
1&1&(11,8,7,7,5,3,2,2)&2&(2)&(10,7,7,7,7,5,4,3,2,2)\\
2&0&(10,7,7,7,7,5,4,3,2,2)&4&(1)&(11,8,8,7,5,4,4,3,2,2)\\
3&0&(11,8,8,7,5,4,4,3,2,2)&4&(1)&(12,9,9,5,4,4,4,3,2,2)\\
4&0&(12,9,9,5,4,4,4,3,2,2)&3&-&-\\
\hline
\end{array}
$$
and $\Phi_3((\Delta_{3,2},(11,8,7,7,5,3,2,2)))=\phi_3^3((\Delta_{3,2},(11,8,7,7,5,3,2,2)))=(\Delta_{3,0},(12,9,9,5,4,4,4,3,2,2))$. For $j= 5$,
$$
\begin{array}{|c|c|c|c|c|c|}
\hline 
\text{Iteration}&k&\la&d_{5+2k}^\la&\text{case}&\phi_5(\la)\\
\hline\hline
1&0&(11,8,7,7,5,5,4,3,2,2)&(2)&-&-\\
\hline
\end{array}
$$
and $\Phi_5(\Delta_{5,0},(11,8,7,7,5,5,4,3,2,2))= \phi_5^0(\Delta_{5,0},(11,8,7,7,5,5,4,3,2,2)) =(\Delta_{5,0},(11,8,7,7,5,5,4,3,2,2))$.
\end{ex}

\subsection{From $\{\Delta_{j,0}\}\times \F_j$ to $\bigsqcup_{k\geq 0} \{\Delta_{j,2k}\}\times \Pp_{j+2k+1}$}\label{sec:Psi}

Let $\psi_j$ be the map defined on 
$$\left(\bigsqcup_{k\geq 0} \{\Delta_{j,2k}\}\times \Pp\right)\setminus \left(\bigsqcup_{k\geq 0} \{\Delta_{j,2k}\}\times \Pp_{j+2k+1}\right)= \bigsqcup_{k\geq 0} \{\Delta_{j,2k}\}\times \overline{\Pp}_{j+2k+1}$$ as follows. Let $k\geq 0$.

\begin{enumerate}

\item For $(\Delta_{j,2k},\mu) \in \{\Delta_{j,2k}\}\times\left(\overline{\F}_{j+2k+1}\cap \overline{\Pp}_{j+2k+1}\right)$, do the transformation 
$$\Delta_{j,2k},\mu_1,\ldots,\mu_{d_{1+j+2k}^\mu},1+j+2k \mapsto \Delta_{j,2k+2},\mu_1+1,\ldots,\mu_{d_{1+j+2k}^\mu-1}+1\,.$$
This means that, in $\Delta_{j,2k}$, we add the parts $1+j+2k,j+2k+2$, and in $\mu$, we add $1$ to the $d_{1+j+2k}^\mu-1$ largest finite parts and delete the parts $\mu_{d_{1+j+2k}^\mu}=d_{1+j+2k}^\mu+1+j+2k$ and $1+j+2k$ to obtain a partition $\la$. We finally set $\psi_j((\Delta_{j,2k},\mu))=(\Delta_{j,2k+2},\la)$. Note that $|\la|=|\mu|-(j+2k+2)-(j+2k+1)$, and $\ell(\la)=\ell(\mu)-2$.

\item For $(\Delta_{j,2k},\mu) \in \{\Delta_{j,2k}\}\times\left(\F_{j+2k+1}\cap \overline{\Pp}_{j+2k+1}\right)$, do the transformation
$$\mu_1,\ldots,\mu_{d_{1+j+2k}^\mu},1+j+2k\mapsto\mu_1-1,\ldots,\mu_{d_{1+j+2k}^\mu}-1,d_{1+j+2k}^\mu+1+j+2k\,.$$
This means that, in $\mu$, we deleted the part $1+j+2k$, subtract $1$ to the $d_{1+j+2k}^\mu$ largest finite parts and add the part $d_{1+j+2k}^\mu+1+j+2k$ to obtain a partition $\la$. We finally set $\psi_j((\Delta_{j,2k},\mu))=(\Delta_{j,2k},\la)$.  Observe that $|\la|=|\mu|$ and $\ell(\la)=\ell(\mu)$.

\end{enumerate}

For all $(\Delta_{j,0},\mu)\in \{\Delta_{j,0}\}\times \F_j$, 
iterate the map $\psi_j$ as long  as it is possible. The iteration stops as soon as we reach a pair in $\bigsqcup_{k\geq 0} \{\Delta_{j,2k}\}\times \Pp_{j+2k+1}$. We then set $\Psi_j((\Delta_{j,0},\mu)) = \psi_j^{u}((\Delta_{j,0},\mu))\in \bigsqcup_{k\geq 0} \{\Delta_{j,2k}\}\times \Pp_{j+2k+1}$. Similarly to $\Phi_j$, if $\Psi_j$ is well-defined, it then induces a map from $\{\Delta_{j,0}\}\times \left(\F_j\cap \overline{\Pp}_j\right)$ to $\bigsqcup_{k\geq 0} \{\Delta_{j,2k}\}\times\left(\Pp_{j+2k+1}\cap\overline{\Pp}_j\right)$.

The reader can easily check that, by applying the corresponding $\Psi_j$ to the pairs obtained in Example \ref{ex:main}, we retrieve the pairs corresponding to $(11,8,7,7,5,5,4,3,2,2)$ by the exact inverse process.


\section{Proof of the well-definedness of the bijection}\label{sec:proofmain}

The maps $\Phi_j$ and $\Psi_j$ preserve the weight  and length sums of the pair of partitions, as they result from iterations of $\phi_j$ and $\psi_j$ which have this property. To prove that $\Phi_j$ and $\Psi_j$ describe inverse bijections, we first prove that $\phi_j$ and $\psi_j$ are inverse each other, then prove the well-definedness of $\Phi_j$ and $\Psi_j$ and  conclude.

\subsection{The maps $\phi_j$ and $\psi_j$ are inverse each other}

We first prove that $\phi_j$ and $\psi_j$ are inverse each other with the following result.

\begin{prop} \label{prop:phipsi}
For all $k\geq 0$, $\phi_j$ and $\psi_j$ describe inverse bijections between
 
\begin{enumerate}

\item $\{\Delta_{j,2k}\}\times \overline{\F}_{j+2k}$ and $\{\Delta_{j,2k}\}\times \left(\F_{1+j+2k}\cap\overline{\Pp}_{j+2k+1}\right)$,

\item $\{\Delta_{j,2k+2}\}\times \F_{j+2k+2}$ and $\{\Delta_{j,2k}\}\times \left(\overline{\F}_{1+j+2k}\cap\overline{\Pp}_{j+2k+1}\right)$.

\end{enumerate}

\end{prop}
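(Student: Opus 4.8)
The plan is to verify directly that the composite maps $\psi_j\circ\phi_j$ and $\phi_j\circ\psi_j$ are the identity on the respective domains, treating the two cases of the proposition separately since each involves one branch of $\phi_j$ and one branch of $\psi_j$. For case (1), I would start with a pair $(\Delta_{j,2k},\la)\in\{\Delta_{j,2k}\}\times\overline{\F}_{j+2k}$, so that $\la_{d}=d+j+2k$ where $d=d_{j+2k}^\la$. Applying branch (1) of $\phi_j$ produces $\mu$ obtained by deleting the part $d+j+2k$, bumping the top $d-1$ finite parts by $1$, and inserting a part $1+j+2k$. The key verification is that the resulting $\mu$ lies in $\F_{1+j+2k}\cap\overline{\Pp}_{j+2k+1}$ and that $d_{1+j+2k}^\mu=d$ (or whatever the correct index turns out to be), so that branch (2) of $\psi_j$ applied to $(\Delta_{j,2k},\mu)$ — which deletes the part $1+j+2k$, subtracts $1$ from the top $d$ finite parts, and re-inserts $d+1+j+2k$... wait, I must be careful: the staircase stays $\Delta_{j,2k}$ in both $\phi_j$(1) and $\psi_j$(2), so these are the matching branches. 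I would track the Durfee-type index through the transformation using Remark \ref{rem:dj} and the defining inequality \eqref{eq:dj}, confirming that the membership conditions ($\overline{\F}_{j+2k}$ versus $\F_{1+j+2k}\cap\overline{\Pp}_{j+2k+1}$) are exactly the images of each other and that applying the two maps in succession returns the original sequence of parts.

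For case (2), I would do the analogous bookkeeping with a pair $(\Delta_{j,2k+2},\la)\in\{\Delta_{j,2k+2}\}\times\F_{j+2k+2}$: branch (2) of $\phi_j$ (which shortens the staircase from $\Delta_{j,2k+2}$ to $\Delta_{j,2k}$) should land in $\{\Delta_{j,2k}\}\times(\overline{\F}_{1+j+2k}\cap\overline{\Pp}_{j+2k+1})$, and branch (1) of $\psi_j$ (which lengthens the staircase from $\Delta_{j,2k}$ back to $\Delta_{j,2k+2}$) should invert it. Here the indices $j+2k-1$, $j+2k$, and $j+2k+1$ all appear as parts being added or removed, so the main point is to check that after $\phi_j$ the partition genuinely contains the part $j+2k+1$ (hence lies in $\overline{\Pp}_{j+2k+1}$) and fails to have $1+j+2k$ as a value of the form $\la_i-i$ (hence lies in $\overline{\F}_{1+j+2k}$, i.e. $\la_{d'}=d'+1+j+2k$ for the appropriate $d'$), and then that $\psi_j$ reads off exactly those parts and reverses every operation. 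The fact that all transformations touch only parts $>j$, already noted in the construction, guarantees the restriction to the $\overline{\Pp}_j$-versions is automatic.

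The main obstacle I anticipate is the careful tracking of the Durfee-type index $d_{j+2k}^\la$ (respectively $d_{1+j+2k}^\mu$) through each transformation: the operation "add $1$ to the top $d-1$ finite parts, delete one part, insert a new part" can in principle shift which index realizes the max in \eqref{eq:dj}, and one must check that membership in $\F$ versus $\overline{\F}$ at the relevant parameter is preserved/exchanged correctly and that the index used by the inverse map is the right one. I would handle this by writing out the sequence $(\la_i-i)_i$ explicitly before and after, using that it is strictly decreasing, and pinning down the new index from the inequalities $\la_d-d\ge j+2k>\la_{d+1}-(d+1)$ together with Remark \ref{rem:dj} relating consecutive $d_j^\la$. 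Once the index is controlled, the conservation of $|\cdot|$ and $\ell$ was already checked in Section \ref{sec:mainbijection}, and bijectivity follows since each branch is manifestly injective with the other branch as a two-sided inverse on the matching pair of sets.
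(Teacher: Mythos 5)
Your plan matches the paper's proof: it pairs branch (1) of $\phi_j$ with branch (2) of $\psi_j$ (staircase fixed at $\Delta_{j,2k}$) and branch (2) of $\phi_j$ with branch (1) of $\psi_j$ (staircase shifting between $\Delta_{j,2k+2}$ and $\Delta_{j,2k}$), and reduces everything to writing out the image partition part by part and pinning down the Durfee-type index from the strictly decreasing sequence $\la_i-i$, which is exactly what the paper does (the index comes out as $d_{1+j+2k}^\mu=d_{j+2k}^\la-1$ in case (1) and $d_{1+j+2k}^\mu=d_{j+2k+2}^\la+1$ in case (2), after which the matching inverse branch visibly undoes each operation). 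One verbal slip to fix: in case (2) the image must \emph{have} $1+j+2k$ among the values $\mu_i-i$ (as your parenthetical $\mu_{d'}=d'+1+j+2k$ correctly states), rather than ``fail to have'' it, since that is what membership in $\overline{\F}_{1+j+2k}$ means.
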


\begin{proof}
Let $k,j\geq 0$.
\begin{enumerate}

\item The maps $\phi_j$ and $\psi_j$ describe inverse bijections between $\{\Delta_{j,2k}\}\times \overline{\F}_{j+2k}$ and\\
 $\{\Delta_{j,2k}\}\times \left(\F_{1+j+2k}\cap\overline{\Pp}_{j+2k+1}\right)$.
 
\begin{enumerate}

\item For $(\Delta_{j,2k},\la) \in \{\Delta_{j,2k}\}\times \overline{\F}_{j+2k}$, 
set $\ell(\la)\geq t\geq d_{j+2k}^\la$ such that $\la_t\geq 1+j+2k>\la_{t+1}$. By the first case of Section \ref{sec:Phi}, $\mu$ consists of the parts 
$$
\begin{cases}
\mu_i=\la_i+1 \ \ \text{ for } \ \ 1,\leq i < d_{j+2k}^\la\,,\\
\mu_i=\la_{i+1}\ \ \ \ \text{ for } \ \ d_{j+2k}^\la\leq i < t\,,\\
\mu_i=\la_i \ \ \ \ \ \ \ \text{ for } \ \ t< i \leq \ell(\la)\,,\\
\mu_t= 1+j+2k \,.
\end{cases}
$$
Since $1+j+2k$ is a part of $\mu$, we have that $\mu \in \overline{\Pp}_{1+j+2k}$.
Moreover, $\la_{d_{j+2k}^\la}\geq \mu_{d_{j+2k}^\la}$, and
\begin{align*}
\mu_{d_{j+2k}^\la-1}-(d_{j+2k}^\la-1)&=\la_{d_{j+2k}^\la-1}-(d_{j+2k}^\la-1)+1\\
&>\la_{d_{j+2k}^\la}-d_{j+2k}^\la+1\\
&= 1+j+2k\\
&>\mu_{d_{j+2k}^\la}-d_{j+2k}^\la\,,
\end{align*}
so that $d_{1+j+2k}^\mu = d_{j+2k}^\la-1$ and $\mu\in \F_{1+j+2k}$. Thus, 
$$\phi_j((\Delta_{j,2k},\la))\in \{\Delta_{j,2k}\}\times \left(\F_{1+j+2k}\cap\overline{\Pp}_{j+2k+1}\right)\,.$$
As $d_{1+j+2k}^\mu = d_{j+2k}^\la-1$, by applying the second case of Section \ref{sec:Psi} on $(\Delta_{j,2k},\mu)$, it is straightforward that $\psi_j(\phi_j((\Delta_{j,2k},\la)))=(\Delta_{j,2k},\la)$.

\item Let $(\Delta_{j,2k},\mu) \in \{\Delta_{j,2k}\}\times \left(\F_{1+j+2k}\cap\overline{\Pp}_{j+2k+1}\right)$.
Similarly to the previous case, as
$$\mu_{d_{1+j+2k}^\mu+1}\leq 1+d_{1+j+2k}^\mu+j+2k\leq \mu_{d_{1+j+2k}^\mu}-1\,,$$
we observe that the part $\la_{d_{1+j+2k}^\mu+1}=1+d_{1+j+2k}^\mu+j+2k$ so that $d_{j+2k}^\la = d_{1+j+2k}^\mu+1$ and $\la\in \F_{j+2k}$. Then, $$\psi_j((\Delta_{j,2k},\mu)) \in \{\Delta_{j,2k}\}\times \overline{\F}_{j+2k}$$. Using the first case of Section \ref{sec:Phi} on $(\Delta_{j,2k},\la)$ with $d_{j+2k}^\la = d_{1+j+2k}^\mu+1$, it is straightforward that $\phi_j(\psi_j((\Delta_{j,2k},\mu)))=(\Delta_{j,2k},\mu)$.
\end{enumerate}

 \item The maps $\phi_j$ and $\psi_j$ describe inverse bijections between $\{\Delta_{j,2k+2}\}\times \F_{j+2k+2}$ and\\
 $\{\Delta_{j,2k}\}\times \left(\overline{\F}_{1+j+2k}\cap\overline{\Pp}_{j+2k+1}\right)$.
 
 \begin{enumerate}
 
\item Let $(\Delta_{j,2k+2},\la) \in \{\Delta_{j,2k+2}\}\times \F_{j+2k+2}$. 
Note that $$\la_{d_{j+2k+2}^\la}-d_{j+2k+2}^\la>j+2k+2>\la_{d_{j+2k+2}^\la+1}-(d_{j+2k+2}^\la+1)\,.$$ 
Then, the partition $\mu$ consists of the parts $\la_1-1,\ldots,\la_{d_{j+2k+2}^\la}-1,\la_{d_{j+2k+2}^\la},\ldots,\la_{\ell}$ and the parts $d_{j+2k+2}^\la+j+2k+2,1+j+2k$. Since $1+j+2k$ is a part $\mu$, $\mu\in \overline{\Pp}_{1+j+2k}$. Moreover, $\la_{d_{j+2k+2}^\la}-1\geq d_{j+2k+2}^\la+j+2k+2\geq \la_{d_{j+2k+2}^\la}$ and $d_{j+2k+2}^\la+j+2k+2>1+j+2k$, so that $\mu_{d_{j+2k+2}^\la+1}=d_{j+2k+2}^\la+j+2k+2$. Hence, $d_{1+j+2k}^\mu = d_{j+2k+2}^\la+1$ and $\mu \in \overline{\F}_{1+j+2k}$, and 
$$\phi_j((\Delta_{j,2k+2},\la))\in \{\Delta_{j,2k}\}\times \left(\overline{\F}_{1+j+2k}\cap\overline{\Pp}_{j+2k+1}\right)\,.$$
Finally, by using the first case of Section \ref{sec:Psi} on $(\Delta_{j,2k},\mu)$ with $d_{1+j+2k}^\mu = d_{j+2k+2}^\la+1$, we retrieve the fact that $\psi_j(\phi_j((\Delta_{j,2k+2},\la)))=(\Delta_{j,2k+2},\la)$.

\item Let $(\Delta_{j,2k},\mu) \in \{\Delta_{j,2k}\}\times \left(\overline{\F}_{1+j+2k}\cap\overline{\Pp}_{j+2k+1}\right)$. 
Using the first case of Section \ref{sec:Psi},  we have that $\la$ consists of the parts $\mu_1+1,\ldots,\mu_{d_{1+j+2k}^\mu-1}+1$, and the parts $\mu_{d_{1+j+2k}^\mu+1},\ldots,\mu_{\ell(\mu)}$ except $1+j+2k$. Moreover, $\mu_{d_{1+j+2k}^\mu-1}+1-(d_{1+j+2k}^\mu-1)> \mu_{d_{1+j+2k}^\mu}-d_{1+j+2k}^\mu +1 = 2+j+2k > \mu_{d_{1+j+2k}^\mu+1}-d_{1+j+2k}^\mu$, so that $d_{j+2k+2}^\la = d_{1+j+2k}^\mu-1$ and $\la \in \F_{j+2k+2}$. Hence, 
$$\psi_j((\Delta_{j,2k},\mu)) \in \{\Delta_{j,2k+2}\}\times \F_{j+2k}\,.$$
We prove similarly to the previous case that $\phi_j(\psi_j((\Delta_{j,2k},\mu)))=(\Delta_{j,2k},\mu)$.

\end{enumerate}

\end{enumerate} 

\end{proof}

\subsection{Well-definedness of $\Phi_j$}

By Proposition \ref{prop:phipsi}, $\phi_j$ is injective, and this implies that a pair is not fixed by $\phi_j$ if and only if its iterations are not fixed. Hence, it suffices to check that  $(\Delta_{j,2k},\la) \in \{\Delta_{j,2k}\}\times \Pp_{1+j+2k}$ is not fixed by $\phi_j$, and that we reach $\{\Delta_{j,0}\}\times \F_j$ after a finite number of iterations of $\phi_j$. In this regard, we state the following proposition.

\begin{prop}\label{prop:phifinite}
Let $(\Delta_{j,2k},\la) \in \{\Delta_{j,2k}\}\times \overline{\F}_{j+2k}$. Then, $\la_{d_{j+2k}^\la}=d_{j+2k}^\la+j+2k$, and $\la_1\geq 1+j+2k$.

\begin{enumerate}

\item If $\la_1=1+j+2k$, then $d_{j+2k}^\la=1$ and $(\Delta_{j,2k},\la)$ is fixed by $\phi_j$. Inversely, all the pairs fixed by $\phi_j$ have the form $(\Delta_{j,2k},\la)$ with $\la_1=1+j+2k$.

\item If $\la_1>1+j+2k$, then $d_{j+2k}^\la\geq 2$ and, by setting $u=\sharp\{i\geq d_{j+2k}^\la:\la_i=d_{j+2k}^\la+j+2k\}$, we have that $$\phi_j^0((\Delta_{j,2k},\la)),\ldots,\phi_j^{u-1}((\Delta_{j,2k},\la))\in \{\Delta_{j,2k}\}\times \overline{\F}_{j+2k}\,,$$
and $\phi_j^{u}((\Delta_{j,2k},\la))\in \{\Delta_{j,2k}\}\times \F_{j+2k}$.

\end{enumerate}

\end{prop}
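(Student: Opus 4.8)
The first thing to establish is the two "header" claims that hold for every pair $(\Delta_{j,2k},\la)\in\{\Delta_{j,2k}\}\times\overline{\F}_{j+2k}$: that $\la_{d^\la_{j+2k}}=d^\la_{j+2k}+j+2k$ and $\la_1\ge 1+j+2k$. The first is literally the definition of $\overline{\F}_{j+2k}$ (recall $\overline{\F}_m$ is the set of $\la$ with $\la_{d^\la_m}=d^\la_m+m$). For the second: if $d^\la_{j+2k}=0$ then $\la_0=\infty$ is forced equal to $0+j+2k$, which is absurd, so $d^\la_{j+2k}\ge 1$; hence $\la_1\ge\la_{d^\la_{j+2k}}=d^\la_{j+2k}+j+2k\ge 1+j+2k$. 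This also shows $\la\in\overline{\Pp}_{1+j+2k}$ is \emph{not} automatic but $\la_1\ge 1+j+2k$ always holds.

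Next, part (1). If $\la_1=1+j+2k$, combine with $\la_1\ge\la_{d^\la_{j+2k}}=d^\la_{j+2k}+j+2k$ to get $d^\la_{j+2k}\le 1$, so $d^\la_{j+2k}=1$. Now inspect which case of $\phi_j$ (Section \ref{sec:Phi}) applies: case (2) requires $k\ge 1$ \emph{and} $\la\in\F_{j+2k}$, but here $\la\in\overline{\F}_{j+2k}$, so only case (1) can apply. In case (1) with $d^\la_{j+2k}=1$, the prescription "delete $\la_{d^\la_{j+2k}}$, add $1$ to the $d^\la_{j+2k}-1=0$ largest finite parts, add a part $1+j+2k$" deletes $\la_1=1+j+2k$ and re-adds $1+j+2k$, leaving $\la$ (hence the pair) unchanged — so it is a fixed point. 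For the converse, a pair can only be moved by $\phi_j$ if it lies in the domain of $\phi_j$, i.e. in $\bigl(\bigsqcup_{k\ge 1}\{\Delta_{j,2k}\}\times\F_{j+2k}\bigr)\sqcup\bigl(\bigsqcup_{k\ge 0}\{\Delta_{j,2k}\}\times\overline{\F}_{j+2k}\bigr)$; on the $\F_{j+2k}$ component $\phi_j$ strictly decreases $k$ (case (2)) so never fixes anything, and on the $\overline{\F}_{j+2k}$ component the computation just done shows the fixed points are exactly those with $d^\la_{j+2k}=1$, equivalently $\la_1=1+j+2k$. I should double check that on the $\F_{j+2k}$ part a pair is genuinely moved (its image is a bona fide partition), but this was already handled in Proposition \ref{prop:phipsi}(2).

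Finally, part (2), which I expect to be the real work. Assume $\la_1>1+j+2k$; with $\la_1\ge\la_{d^\la_{j+2k}}=d^\la_{j+2k}+j+2k$ this only gives $d^\la_{j+2k}\ge 1$, so I actually need the sharper bound $d^\la_{j+2k}\ge 2$ — this should come from looking one step down: since $\la\in\overline{\F}_{j+2k}$, the value $j+2k$ \emph{is} attained by some $\la_i-i$, at $i=d^\la_{j+2k}$; if $d^\la_{j+2k}=1$ then $\la_2-2<j+2k$ while $\la_1-1\ge ?$ — one has to rule out $\la_1-1=j+2k$, i.e. $\la_1=1+j+2k$, which is excluded by hypothesis, but $\la_1-1>j+2k$ is fine, so $d^\la_{j+2k}=1$ is \emph{not} excluded this way; the correct statement must instead be that $d^\la_{j+2k}\ge 2$ because... hmm, actually I suspect the intended reading uses $u=\sharp\{i\ge d^\la_{j+2k}:\la_i=d^\la_{j+2k}+j+2k\}\ge 1$ and the claim "$d^\la_{j+2k}\ge 2$" may need the extra input that $\la\in\Pp_{1+j+2k}$ at the start of the iteration — I would revisit the exact hypotheses here and state precisely which domain is meant. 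Modulo that, the core argument is: apply case (1) of $\phi_j$ and track how $d_{j+2k}$ and the multiplicity $u$ of the value $d^\la_{j+2k}+j+2k$ evolve. Using the case-(1) analysis from Proposition \ref{prop:phipsi}(1)(b), applying case (1) once produces a $\mu$ with $d^\mu_{j+2k}=d^\la_{j+2k}$ again (since a new copy of the relevant value reappears as long as $u\ge 2$) and decreases $u$ by exactly $1$; after $u-1$ such steps we are still in $\overline{\F}_{j+2k}$, and the $u$-th application lands in $\F_{j+2k}$ (the value $d^\la_{j+2k}+j+2k$ is no longer attained by any $\la_i-i$). This is a finite induction on $u$. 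Chaining this with case (2) — which, applied to a pair in $\{\Delta_{j,2k}\}\times\F_{j+2k}$, strictly decreases $k$ by $1$ — gives termination of the full iteration $\Phi_j$ after finitely many steps, reaching $\{\Delta_{j,0}\}\times\F_j$; and by the converse in part (1) nothing stalls prematurely unless $\la_1=1+j+2k$, i.e. only at the genuine stopping configurations. The main obstacle is bookkeeping the invariant $d^\mu_{j+2k}=d^\la_{j+2k}$ and the decrement of $u$ cleanly, which is a direct but careful unwinding of the case-(1) formula already written out in Section \ref{sec:Phi} and verified in Proposition \ref{prop:phipsi}.
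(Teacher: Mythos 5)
Your overall strategy coincides with the paper's: apply case (1) of $\phi_j$ repeatedly, show that $d_{j+2k}^\la$ is preserved while the multiplicity $u$ of the value $d_{j+2k}^\la+j+2k$ drops by one at each step (the inserted part $1+j+2k$ never creates a new copy since $d_{j+2k}^\la\geq 2$), and conclude that the $u$-th application skips the value $j+2k$, landing in $\F_{j+2k}$; this is exactly the bookkeeping the paper carries out via the explicit formulas for $\la^{(w)}$ (and the relevant earlier case is Proposition~\ref{prop:phipsi}(1)(a), not (1)(b)). In part (1), note also that your computation only shows that $d_{j+2k}^\la=1$ gives a fixed pair; for the stated converse you should add the one-line observation that when $d_{j+2k}^\la\geq 2$ the image has first part $\la_1+1\neq\la_1$, so such a pair is not fixed.

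The genuine gap is in part (2): you do not prove $d_{j+2k}^\la\geq 2$, and you suggest the statement may require the extra hypothesis $\la\in\Pp_{1+j+2k}$. It does not, and the scenario you declare ``fine'' ($d_{j+2k}^\la=1$ together with $\la_1-1>j+2k$) is impossible under the standing assumption $\la\in\overline{\F}_{j+2k}$: membership in $\overline{\F}_{j+2k}$ means precisely that $\la_{d_{j+2k}^\la}-d_{j+2k}^\la=j+2k$ (the equality you yourself invoked for the header claims), so $d_{j+2k}^\la=1$ forces $\la_1=1+j+2k$, contradicting the hypothesis $\la_1>1+j+2k$ of part (2); if instead $\la_1-1>j+2k>\la_2-2$, then $\la\in\F_{j+2k}$, not $\overline{\F}_{j+2k}$. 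Hence $d_{j+2k}^\la\geq 2$ follows immediately from the hypotheses as stated, with no appeal to $\Pp_{1+j+2k}$ (which indeed only enters later, when arguing that the initial pair of the iteration of $\Phi_j$ is not fixed). Once this point is repaired, the rest of your part-(2) argument goes through and matches the paper's proof.
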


We first prove the well-definedness of $\Phi_j$ assuming that the above proposition is true. Let $\la \in \Pp_{1+j+2k}$. Since $1+j+2k$ is not  a part of $\la$, $\la_1\neq 1+j+2k$ and by Proposition \ref{prop:phifinite}, 
$(\Delta_{j,2k},\la)$ is not fixed by $\Phi_j$. Hence, its iterations  are not fixed by $\phi_j$. We can then use the second case of Proposition \ref{prop:phifinite} and deduce the existence of non-negative integers $u_l$ that counts the numbers of iterations of $(\Delta_{j,2k},\la)$ respectively in $\{\Delta_{j,2l}\}\times \overline{\F}_{j+2l}$. More precisely, we have for all $ l\in \{0,\ldots,k\}$,
\small{
$$\phi_j^{k-l+u+\sum_{t=l+1}^k u_t}((\Delta_{j,2k},\la))\in \{\Delta_{j,2l}\}\times \overline{\F}_{j+2l}\text{ for }0\leq u< u_l\text{ and }\phi_j^{k-l+\sum_{t=l}^k u_t}((\Delta_{j,2k},\la))\in \{\Delta_{j,2l}\}\times \F_{j+2l}\,.$$
}
Since these iterations are not fixed, by setting $n=|\la|+|\Delta_{j,2k}|$, we have by definition of $u_l$ in Proposition \ref{prop:phifinite} that 
$u_l+1\leq \frac{n}{2+j+2l}\leq \frac{n}{2}\cdot \frac{1}{l+1}$. Therefore, there is at most 
$\frac{n}{2}(1+\log(k+2))$ iterations, hence finite. Moreover, $$\Phi_j((\Delta_{j,2k},\la))=\phi_j^{k+\sum_{t=0}^k u_t}((\Delta_{j,2k},\la))\in \{\Delta_{j,0}\}\times \F_{j}$$ so that $\Phi_j((\Delta_{j,2k},\la))$ is well-defined.

\begin{proof}[Proof of Proposition \ref{prop:phifinite}]
Let $(\Delta_{j,2k},\la) \in \{\Delta_{j,2k}\}\times \overline{\F}_{j+2k}$.
Recall that $\la_{d_{j+2k}^\la}=d_{j+2k}^\la+j+2k$ and set $\phi_j((\Delta_{j,2k},\la))=(\Delta_{j,2k},\mu)$.

\begin{enumerate}

\item As $\la_1\geq 1+j+2k$, the fact that $d_{j+2k}^\la$ is unique implies that $\la_1=1+j+2k$ if and only if $d_{j+2k}^\la=1$. The fact that $\phi_j(\Delta_{j,2k},\la)=(\Delta_{j,2k},\la)$ is trivial, as $\mu$ is obtained by deleting $\la_1$, adding $1+j+2k$, and not adding $1$ to any other part. Inversely, only the pairs of $\{\Delta_{j,2k}\}\times \overline{\F}_{j+2k}$ can be fixed by $\phi_j$, and when $d_{j+2k}^\la\geq 2$, $\mu_1=\la_1+1>\la_1$ so that $\phi_j(\Delta_{j,2k},\la)\neq (\Delta_{j,2k},\la)$. Hence, the only pairs fixed by $\phi_j$ have the form $(\Delta_{j,2k},\la)$ with $\la_1=1+j+2k$.

\item When $\la_1>1+j+2k$, $d_{j+2k}^\la\geq 2$. Since $\mu_{d_{j+2k}^\la-1}>\la_{d_{j+2k}^\la}\geq \mu_{d_{j+2k}^\la}$, we then have that $\mu_{d_{j+2k}^\la-1}-(d_{j+2k}^\la-1)>j+2k\geq \mu_{d_{j+2k}^\la}-{d_{j+2k}^\la}$. Hence, $\mu \in \overline{\F}_{j+2k}$ if and only if $d_{j+2k}^\la=d_{j+2k}^\mu$  and $\mu_{d_{j+2k}^\la}=d_{j+2k}^\la+j+2k>1+j+2k$. This  occurs only if $\mu_{d_{j+2k}^\la}=\la_{d_{j+2k}^\la+1}=d_{j+2k}^\la+j+2k$.
Thus, by formally setting $(\Delta_{j,2k},\la^{(w)})=\phi_j^w((\Delta_{j,2k},\la))$ for $w\in \{0,\ldots,u\}$ and $\ell(\la)\geq t\geq d_{j+2k}^\la+u-1$ such that $\la_t\geq 1+j+2k>\la_{t+1}$, we have that
$$
\begin{cases}
\la_i^{(w)}=\la_i+w \ \ \ \ \ \ \ \ \text{ if } \ \ 1\leq i\leq d_{j+2k}^\la-1\,,\\
\la_i^{(w)}=\la_{i+w} \ \ \ \ \ \ \ \ \ \ \text{ if } \ \ d_{j+2k}^\la\leq i\leq t-w\,,\\
\la_i^{(w)}=1+j+2k \ \ \ \text{ if } \ \ t-w+1\leq i\leq t\,,\\
\la_i^{(w)}=\la_{i} \ \ \ \ \ \ \ \ \ \ \ \ \ \ \text{ if } \ \ t+1\leq i\leq \ell(\la)\,,
\end{cases}
$$ 
as we recursively obtain that $d_{j+2k}^{\la^{(w)}}=d_{j+2k}^\la$ and $\la_{d_{j+2k}^\la}^{(w)}=\la_{d_{j+2k}^\la+w}=d_{j+2k}^\la+j+2k$ for all $w\in \{0,\ldots,u-1\}$. Moreover, $\la_{d_{j+2k}^\la}^{(u)}$ is either $\la_{d_{j+2k}^\la+u}$ when $t\geq d_{j+2k}^\la+u$, or $1+j+2k$ when $t= d_{j+2k}^\la+u-1$. In all cases, $\la_{d_{j+2k}^\la}^{(u)}-d_{j+2k}^\la<j+2k<\la_{d_{j+2k}^\la-1}^{(u)}-(d_{j+2k}^\la-1)-u$, so that $\la^{(u)}\in \F_{j+2k}$.

\end{enumerate}

\end{proof}

\subsection{Well-defined of $\Psi_j$}

As before, we only need to check that $(\Delta_{j,0},\la)\in \{\Delta_{j,0}\}\times \F_j$ is not fixed by $\psi_j$, and that we reach $\bigsqcup_{k\geq 0} \{\Delta_{j,2k}\}\times \Pp_{j+2k+1}$ after a finite number of iterations of $\psi_j$. The following propositions helps in that purpose.

\begin{prop}\label{prop:psifinite}
For $(\Delta_{j,2k},\mu) \in \{\Delta_{j,2k}\}\times \left(\F_{1+j+2k}\cap\overline{\Pp}_{j+2k+1}\right)$, we have $\mu_{d_{1+j+2k}^\mu}-d_{1+j+2k}^\mu>1+j+2k>\mu_{d_{1+j+2k}^\mu+1}-(d_{1+j+2k}^\mu+1)$.

\begin{enumerate}

\item If $d_{1+j+2k}^\mu=0$, then $\mu_1=1+j+2k$ and $(\Delta_{j,2k},\mu)$ is fixed by $\psi_j$. Inversely, all the pairs fixed by $\psi_j$ have the form $(\Delta_{j,2k},\mu)$ with $\mu_1=1+j+2k$.

\item If $d_{1+j+2k}^\mu\geq 1$, set $u=\mu_{d_{1+j+2k}^\mu}-d_{1+j+2k}^\mu-(1+j+2k)$, and $v$ the number occurrences of  $1+j+2k$ in $\mu$.

\begin{enumerate}

\item If $v>u$, then 
$$\psi_j^0((\Delta_{j,2k},\mu)),\ldots,\psi_j^{u-1}((\Delta_{j,2k},\mu))\in \{\Delta_{j,2k}\}\times \left(\F_{1+j+2k}\cap\overline{\Pp}_{j+2k+1}\right)\,,$$
and $\psi_j^{u}((\Delta_{j,2k},\mu))\in \{\Delta_{j,2k}\}\times \left(\overline{\F}_{1+j+2k}\cap\overline{\Pp}_{j+2k+1}\right)$.

\item If $v\leq u$, then 
$$\psi_j^0((\Delta_{j,2k},\mu)),\ldots,\psi_j^{v-1}((\Delta_{j,2k},\mu))\in \{\Delta_{j,2k}\}\times \left(\F_{1+j+2k}\cap\overline{\Pp}_{j+2k+1}\right)\,,$$
and $\psi_j^{v}((\Delta_{j,2k},\mu))\in \{\Delta_{j,2k}\}\times \Pp_{j+2k+1}$.

\end{enumerate} 

\end{enumerate}

\end{prop}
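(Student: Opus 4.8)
The plan is to analyze the map $\psi_j$ restricted to $\{\Delta_{j,2k}\}\times\left(\F_{1+j+2k}\cap\overline{\Pp}_{j+2k+1}\right)$, which by construction always falls under case (2) of Section \ref{sec:Psi}, and to track how the two relevant quantities---the ``excess'' $\mu_{d_{1+j+2k}^\mu}-d_{1+j+2k}^\mu-(1+j+2k)$ of the largest finite part above the threshold, and the number of occurrences of the part $1+j+2k$---evolve under iteration. First I would establish the opening claim: membership in $\F_{1+j+2k}$ means $1+j+2k$ is not of the form $\mu_i-i$, which by the definition of $d_{1+j+2k}^\mu$ forces the strict inequalities $\mu_{d_{1+j+2k}^\mu}-d_{1+j+2k}^\mu>1+j+2k>\mu_{d_{1+j+2k}^\mu+1}-(d_{1+j+2k}^\mu+1)$. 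For part (1), if $d_{1+j+2k}^\mu=0$ there are no finite parts exceeding $1+j+2k$ in the relevant sense, so the strict inequality $\mu_1-1<1+j+2k$ combined with $\mu\in\overline{\Pp}_{1+j+2k}$ gives $\mu_1=1+j+2k$; then case (2) of $\psi_j$ subtracts $1$ from zero parts and re-adds the part $0+1+j+2k=1+j+2k$, so the pair is fixed. Conversely, by the case (2) formula, whenever $d_{1+j+2k}^\mu\geq 1$ the largest finite part strictly decreases (or the configuration genuinely changes), so only pairs with $\mu_1=1+j+2k$ can be fixed; I should phrase this carefully using that $\psi_j$ is injective by Proposition \ref{prop:phipsi}, so a pair is fixed iff all its iterates are.

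For part (2), I would set $u=\mu_{d_{1+j+2k}^\mu}-d_{1+j+2k}^\mu-(1+j+2k)\geq 1$ and let $v$ be the number of occurrences of $1+j+2k$ in $\mu$. Each application of case (2) of $\psi_j$ does two things: it removes one copy of the part $1+j+2k$ (decreasing $v$ by $1$) and subtracts $1$ from the $d_{1+j+2k}^\mu$ largest finite parts while inserting a new part $d_{1+j+2k}^\mu+1+j+2k$ at position $d_{1+j+2k}^\mu+1$; I would verify, exactly as in the proof of Proposition \ref{prop:phifinite}(2), that the index $d_{1+j+2k}^{\mu}$ is unchanged by this operation as long as the excess stays positive, and that the excess $u$ decreases by exactly $1$ each step. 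Running the iteration, after $w$ steps the excess is $u-w$ and the occurrence count is $v-w$, and the pair stays in $\{\Delta_{j,2k}\}\times\left(\F_{1+j+2k}\cap\overline{\Pp}_{j+2k+1}\right)$ precisely while both $u-w>0$ and $v-w>0$ (the second condition is what keeps us in case (2) rather than case (1), since case (1) requires $\mu\in\overline{\F}_{1+j+2k}$). So if $v>u$, after $u-1$ steps we are still in that set, and the $u$-th step makes the excess vanish, i.e.\ $1+j+2k$ becomes of the form $\mu_i-i$, landing us in $\overline{\F}_{1+j+2k}\cap\overline{\Pp}_{j+2k+1}$ (still containing a copy of $1+j+2k$ since $v-u\geq 1$). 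If $v\leq u$, after $v-1$ steps we are still in the set, and the $v$-th step removes the last copy of $1+j+2k$, landing us in $\Pp_{j+2k+1}$.

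The main obstacle I expect is bookkeeping the index $d_{1+j+2k}^\mu$ across iterations and confirming it is genuinely invariant: one must check that subtracting $1$ from the top $d_{1+j+2k}^\mu$ parts and reinserting $d_{1+j+2k}^\mu+1+j+2k$ neither promotes the former part $1+j+2k$ above the threshold nor demotes the reinserted part below it, which amounts to the chain of inequalities $\mu_{d_{1+j+2k}^\mu}-1\geq d_{1+j+2k}^\mu+1+j+2k\geq \mu_{d_{1+j+2k}^\mu+1}$ together with $\mu_{d_{1+j+2k}^\mu-1}-1\geq\mu_{d_{1+j+2k}^\mu}-1$; the first of these is equivalent to $u\geq 1$ and to the strict upper inequality from the $\F$-condition, so the argument closes exactly when the excess is still positive, which is why the iteration terminates precisely at step $\min(u,v)$. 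A secondary point is the edge case where $d_{1+j+2k}^\mu+1>\ell(\mu)$ after deleting the part $1+j+2k$, handled by the conventions $\mu_0=\infty$, $\mu_{\ell(\mu)+1}=0$. Once this invariance is in hand, the finiteness statement for $\Psi_j$ follows immediately as in the $\Phi_j$ case, since each stage contributes at most $\min(u,v)\leq u\leq \tfrac{n}{2}\cdot\tfrac{1}{k+1}$ iterations where $n$ is the conserved total weight.
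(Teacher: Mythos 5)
Your proposal is correct and follows essentially the same route as the paper: show that $d=d_{1+j+2k}^\mu$ is invariant under each case-(2) application via the inequality $\mu_{d}-1\geq d+1+j+2k\geq \mu_{d+1}$, note that the excess $u$ and the multiplicity $v$ of the part $1+j+2k$ each drop by exactly $1$ per step, and conclude that the iteration exits to $\overline{\F}_{1+j+2k}\cap\overline{\Pp}_{j+2k+1}$ or to $\Pp_{j+2k+1}$ at step $\min(u,v)$, which is precisely what the paper's explicit formulas for $\mu^{(w)}$ encode (your direct argument for the converse of part (1) is a harmless simplification of the paper's detour through Propositions \ref{prop:phipsi} and \ref{prop:phifinite}). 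The only slip is the parenthetical assigning the two conditions the wrong roles ($u-w>0$ is what keeps the pair in $\F_{1+j+2k}$, i.e.\ in case (2), while $v-w>0$ is what keeps it in $\overline{\Pp}_{j+2k+1}$, i.e.\ in the domain of $\psi_j$); since your subsequent case analysis uses the correct roles, nothing breaks.
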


To prove the well-definedness of $\Psi_j$, we first observe that, for $\mu \in \F_j$, $\mu_1\neq j+1$, and by Proposition \ref{prop:psifinite}, $(\Delta_{j,0},\mu)$ is not fixed by $\psi_j$. Thus, as $\psi_j$ is injective, its iterations are not fixed by $\psi_j$. Moreover, for $l$ such that $|\Delta_{j,2l}|=l(1+2j+2l))>|\mu|$, these iterations do not reach $\{\Delta_{j,2l}\}\times \Pp$ since $\psi_j$ conserves the weight sum of the pair. Let then $k$ be the largest $l$ such that $\{\Delta_{j,2l}\}\times \Pp$ is reached by the iterations. By Proposition \ref{prop:psifinite}, for $0\leq l\leq k$, $w_l$ the number of iterations of $(\Delta_{j,0},\mu)$ by $\psi_j$ in $\{\Delta_{j,2l}\}\times \Pp$ is at most equal to one plus the number of occurrences of $1+j+2l$, hence at most equal to $\frac{|\mu|}{1+j+2l}$. We then have in total at most $\frac{|\mu|(2+\log(k+1))}{2}$ iterations. Finally, at $k$, 
$$\Psi_j((\Delta_{j,0},\mu)) = \psi_j^{k+\sum_{l=0}^k w_l}((\Delta_{j,0},\mu))$$
is necessarily in $\{\Delta_{j,2k}\}\times \Pp_{j+2k+1}$ as the iterations stop. Hence, $\Psi_j$ is well-defined.

\begin{proof}[Proof of Proposition \ref{prop:psifinite}]
As $1+j+2k$ is a part of $\mu$, we then have $\mu_1\geq 1+j+2k$. Set $\psi_j((\Delta_{j,2k},\mu))=(\Delta_{j,2k},\la)$.

\begin{enumerate}

\item If $d_{1+j+2k}^\mu=0$, then $\mu_1\leq 1+j+2k$, so that $\mu_1=1+j+2k$. One can easily check that such pair $(\Delta_{j,2k},\mu)$ is fixed by $\psi_j$. Inversely, the only pairs fixed by $\psi_j$ are in $\{\Delta_{j,2k}\}\times \left(\F_{1+j+2k}\cap\overline{\Pp}_{j+2k+1}\right)$ for some $k\geq 0$. By Proposition \ref{prop:phipsi}, the pairs of $\{\Delta_{j,2k}\}\times \left(\F_{1+j+2k}\cap\overline{\Pp}_{j+2k+1}\right)$ fixed by $\psi_j$ are exactly the pairs of $\{\Delta_{j,2k}\}\times \F_{j+2k}$ fixed by $\phi_j$. Finally, Proposition \ref{prop:phifinite} gives us the form of the fixed pairs, which is $(\Delta_{j,2k},\mu)$ with $\mu_1=1+j+2k$.

\item If $d_{1+j+2k}^\mu\geq 1$, then $\mu_1\geq \mu_{d_{1+j+2k}^\mu}>1+j+2k+d_{1+j+2k}^\mu$. By the second part of Section \ref{sec:Psi},
$$\mu_{d_{1+j+2k}^\mu}-1\geq 1+j+2k+d_{1+j+2k}^\mu \geq \mu_{d_{1+j+2k}^\mu+1}\,,$$ so that $\la_{d_{1+j+2k}^\mu+1}=1+j+2k+d_{1+j+2k}^\mu$. Hence,
$$\la_{d_{1+j+2k}^\mu}-d_{1+j+2k}^\mu\geq 1+j+2k>\la_{d_{1+j+2k}^\mu+1}-(d_{1+j+2k}^\mu+1)\,,$$ and $d_{1+j+2k}^\la=d_{1+j+2k}^\mu$. Therefore,
both $u=\mu_{d_{1+j+2k}^\mu}-(1+j+2k+d_{1+j+2k}^\mu)$ and $v$ the number of occurrences of $1+j+2k$ decrease by $1$ after $\psi_j$ is applied once, while $d_{1+j+2k}^\mu$ is conserved. Let $\ell(m)\geq t\geq d_{1+j+2k}^\mu+v$ such that $\mu_t=1+j+2k>\mu_{t+1}$. 

\begin{enumerate}

\item If $v>u$, by formally setting $(\Delta_{j,2k},\mu^{(w)})=\psi_j^w((\Delta_{j,2k},\mu))$ for $w\in \{0,\ldots,u\}$, we have that
$$
\begin{cases}
\mu_i^{(w)}=\mu_i-w \ \ \ \ \ \ \ \ \ \ \ \ \ \ \ \ \ \ \ \ \ \ \text{ if } \ \ 1\leq i\leq d_{1+j+2k}^\mu\,,\\
\mu_i^{(w)}=1+d_{1+j+2k}^\mu+j+2k \ \ \ \text{ if } \ \ d_{1+j+2k}^\mu+1\leq i\leq d_{1+j+2k}^\mu+w\,,\\
\mu_i^{(w)}=\mu_{i-w} \ \ \ \ \ \ \ \ \ \ \ \ \ \ \ \ \ \ \ \ \ \ \ \ \text{ if } \ \ d_{1+j+2k}^\mu+w+1\leq i\leq t\,,\\
\mu_i^{(w)}=\mu_{i} \ \ \ \ \ \ \ \ \ \ \ \ \ \ \ \ \ \ \ \ \ \ \ \ \ \ \ \ \text{ if } \ \ t+1\leq i\leq \ell(\mu)\,,
\end{cases}
$$ 
as we recursively obtain that $d_{1+j+2k}^{\mu^{(w)}}=d_{1+j+2k}^\mu$,
$$\mu_{d_{1+j+2k}^\mu}^{(w)}=\mu_{d_{1+j+2k}^\mu}-w>d_{1+j+2k}^\mu+1+j+2k$$ and $\mu_{t}^{(w)}=1+j+2k$ for all $w\in \{0,\ldots,u-1\}$.  Finally, we have $\mu_{d_{1+j+2k}^\mu}^{(u)}=d_{1+j+2k}^\mu+1+j+2k$ and $\mu_{t}^{(u)}=1+j+2k$ so that $\mu^{(u)}\in \overline{\F}_{1+j+2k}\cap \overline{\Pp}_{1+j+2k}$.

\item If $v\leq u$, by formally setting $(\Delta_{j,2k},\mu^{(w)})=\psi_j^w((\Delta_{j,2k},\mu))$ for $w\in \{0,\ldots,v\}$, we have that
$$
\begin{cases}
\mu_i^{(w)}=\mu_i-w \ \ \ \ \ \ \ \ \ \ \ \ \ \ \ \ \ \ \ \ \ \ \text{ if } \ \ 1\leq i\leq d_{1+j+2k}^\mu\,,\\
\mu_i^{(w)}=1+d_{1+j+2k}^\mu+j+2k \ \ \ \text{ if } \ \ d_{1+j+2k}^\mu+1\leq i\leq d_{1+j+2k}^\mu+w\,,\\
\mu_i^{(w)}=\mu_{i-w} \ \ \ \ \ \ \ \ \ \ \ \ \ \ \ \ \ \ \ \ \ \ \ \ \text{ if } \ \ d_{1+j+2k}^\mu+w+1\leq i\leq t\,,\\
\mu_i^{(w)}=\mu_{i} \ \ \ \ \ \ \ \ \ \ \ \ \ \ \ \ \ \ \ \ \ \ \ \ \ \ \ \ \text{ if } \ \ t+1\leq i\leq \ell(\mu)\,,
\end{cases}
$$ 
as we recursively obtain that $d_{1+j+2k}^{\mu^{(w)}}=d_{1+j+2k}^\mu$,  $$\mu_{d_{1+j+2k}^\mu}^{(w)}=\mu_{d_{1+j+2k}^\mu}-w>d_{1+j+2k}^\mu+1+j+2k$$
and $\mu_t=1+j+2k$ for all $w\in \{0,\ldots,v-1\}$. Finally, we have $d_{1+j+2k}^{\mu^{(w)}}=u-v+d_{1+j+2k}^\mu+1+j+2k$, and $\mu_{t}^{(u)}$ is either $1+d_{1+j+2k}^\mu+j+2k$ when $t=d_{1+j+2k}^\mu+v$, or $\mu_{t-v}>1+j+2k$ when $t>d_{1+j+2k}^\mu+v$, so that $\mu^{(u)}\in \overline{\Pp}_{1+j+2k}$.

\end{enumerate}

\end{enumerate}

\end{proof}

\subsection{The maps $\Phi_j$ and $\Psi_j$ are inverse each other}

For $(\Delta_{j,2k},\la)\in \{\Delta_{j,2k}\}\times \Pp_{1+j+2k}$, there exists a unique finite non-negative integer $u$ such that $\Phi_j((\Delta_{j,2k},\la))=\phi_j^u((\Delta_{j,2k},\la))\in \{\Delta_{j,2k}\}\times \F_j$. Then, by Proposition \ref{prop:phipsi}, $\psi_j^u(\Psi_j((\Delta_{j,2k},\la)))=(\Delta_{j,2k},\la)$, and as it belongs to $\{\Delta_{j,2k}\}\times \Pp_{1+j+2k}$, it is by definition $\Psi_j(\Phi_j((\Delta_{j,2k},\la)))$. Similarly, we prove that,
for $\mu \in \F_j$, $\Phi_j(\Psi_j((\Delta_{j,0},\mu)))=(\Delta_{j,0},\mu)$. 

Finally, since the bijections preserve the part less or equal to $j$, $\Phi_j$ then induces a bijection from $\M_j\cap\overline{\Pp}_j$ to $\F_j\cap\overline{\Pp}_j$ and $\Psi_j=\Phi_j^{-1}$.

\section{Bijection for Corollary \ref{cor:mainbis}}\label{sec:corbijection}

We here provide a bijection of Corollary \ref{cor:mainbis} in the spirit of Section \ref{sec:mainbijection}. First, observe the following correspondence. 

\begin{lem}
There is a weight-preserving bijection between $\overline{\F}_j$ and $\{\Delta_{j,1}\}\times \F_{j+1}$.
\end{lem}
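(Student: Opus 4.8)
The statement asks for a weight-preserving bijection between $\overline{\F}_j$ (partitions $\la$ with $\la_{d_j^\la}=d_j^\la+j$, i.e. $j$ appears among the values $\la_i-i$) and $\{\Delta_{j,1}\}\times \F_{j+1}$, where $\Delta_{j,1}=(j+1)$ is the single part $j+1$. The plan is to exploit the Durfee-type description from Remark \ref{rem:durfee}: $\overline{\F}_j$ consists of partitions $\la\equiv(d_0^\la,\mu,\nu)$ whose $\mu$-sequence contains $j$, and more precisely, since the $\mu_i=\la_i-i$ are strictly increasing, $j$ occurs exactly once, say as $\mu_{d_j^\la}$. The idea is to peel off the row indexed by $d_j^\la$, which contributes exactly the part $\la_{d_j^\la}=d_j^\la+j$, and reorganize the remaining data into a partition lying in $\F_{j+1}$ together with the recorded part $j+1$.

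First I would make precise the local picture near index $d:=d_j^\la$: we have $\la_{d-1}-(d-1)>j=\la_d-d>\la_{d+1}-(d+1)$, so $\la_{d+1}-d\leq j<\la_{d-1}-(d-1)$, equivalently $\la_{d+1}\leq d+j$ and $\la_{d-1}\geq d+j$. The transformation I have in mind is to delete the part $\la_d=d+j$, subtract $1$ from each of the $d-1$ larger finite parts $\la_1,\dots,\la_{d-1}$ (turning $\la_i-i$ into $\la_i-1-i$, still $>j-1$, hence $\geq j$ with equality impossible for $i<d$... one must check it becomes $\geq j$ but the value $j$ is not attained, securing membership in $\F_{j+1}$ after the index shift), and insert a new part $j+1$ recorded as the $\Delta_{j,1}$ component. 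Weight bookkeeping: we remove $d+j$, lose $d-1$ from the subtractions, and the $j+1$ goes into the $\Delta_{j,1}$ slot; so the partition image has weight $|\la|-(d+j)-(d-1)$ and the pair has total weight $|\la|-(d+j)-(d-1)+(j+1)=|\la|-2d+2$. This does \emph{not} preserve weight, so I would instead \emph{not} subtract: simply remove the part $d+j$ and declare that the residual multiset of parts, reindexed, must be checked to avoid $j+1$ among its shifted values; the removed weight $d+j$ is then redistributed by adding $1$ to the top $d+j-(j+1)=d-1$ parts and recording $j+1$. One sees the accounting is delicate; the clean route is to mimic exactly the moves in Section \ref{sec:Phi}, case (2), restricted to a single application: the correspondence $\la\mapsto(\Delta_{j,1},\mu)$ given by deleting $\la_{d_j^\la}$, subtracting $1$ from $\la_1,\dots,\la_{d_j^\la-1}$... but now adding the deleted weight back so the total is conserved — concretely, realize $\overline{\F}_j\hookrightarrow\{\Delta_{j,0}\}\times\overline{\F}_j$ and apply $\psi_j$-type move (1) of Section \ref{sec:Psi} \emph{in reverse}.

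Given that, the cleanest proof I would actually write is: by Proposition \ref{prop:phipsi}(1) with $k=0$, $\phi_j$ and $\psi_j$ are inverse bijections between $\{\Delta_{j,0}\}\times\overline{\F}_j$ and $\{\Delta_{j,0}\}\times(\F_{j+1}\cap\overline{\Pp}_{j+1})$, both weight-preserving. So $\overline{\F}_j\cong \F_{j+1}\cap\overline{\Pp}_{j+1}$ by a single application of $\phi_j$. It then remains to identify $\F_{j+1}\cap\overline{\Pp}_{j+1}$ with $\{\Delta_{j,1}\}\times\F_{j+1}$: a partition in $\F_{j+1}$ containing the part $j+1$ is, by the identification convention at the start of Section \ref{sec:mainbijection} (writing $\Delta_{j,1}=(j+1)$ and noting that adjoining/removing the part $j+1$ does not affect membership in $\F_{j+1}$, since $\F_{j+1}$ is a condition on the values $\la_i-i$ and one checks a part equal to $j+1$ never contributes the value $j+1$ unless it sits at index $1$, which is excluded because... here a short verification is needed), the same correspondence $\overline{\Pp}_{j+1}\leftrightarrow\{\Delta_{j,1}\}\times\Pp$ used throughout identifies $\F_{j+1}\cap\overline{\Pp}_{j+1}$ with $\{\Delta_{j,1}\}\times\F_{j+1}$, weight-preservingly since $|\Delta_{j,1}|=j+1$ is exactly the weight of the removed part. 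Composing gives the desired weight-preserving bijection $\overline{\F}_j\cong\{\Delta_{j,1}\}\times\F_{j+1}$.

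The main obstacle is the bookkeeping in the last identification: one must verify that removing a single part equal to $j+1$ from a partition $\la\in\F_{j+1}\cap\overline{\Pp}_{j+1}$ yields a partition still in $\F_{j+1}$ (and conversely), i.e. that the condition "$j+1\notin\{\la_i-i\}$" is stable under deleting/inserting a part of size $j+1$. This is true because inserting a part $j+1$ at its sorted position $p$ replaces $\la_p-p$ by $(j+1)-p$ and shifts later indices, and a routine case check (using $p\geq 2$, forced since $j+1$ is not the largest part when there is any larger part, and the degenerate all-parts-$\leq j+1$ case handled separately) shows no new value $j+1$ is created; I would present this verification explicitly as the one genuinely non-formal step, the rest being immediate from Proposition \ref{prop:phipsi} and the standing identifications.
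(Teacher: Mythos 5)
Your final argument is correct and amounts to the paper's own proof: composing the $k=0$ case of $\phi_j$ from Proposition \ref{prop:phipsi}(1) with the extraction of one part $j+1$ into the $\Delta_{j,1}$ slot gives exactly the map $\psi'_j$ that the paper writes down directly (with the same appeal to Proposition \ref{prop:phipsi} for the inverse verification). The extra check your packaging needs — that adjoining or deleting a part $j+1$ does not affect membership in $\F_{j+1}$ — does hold, and more cleanly than your sketch suggests: a value $\la_i-i=j+1$ can only arise from a part $\la_i>j+1$ (a part $\leq j+1$ at an index $i\geq 1$ contributes a value $\leq j$), and the parts greater than $j+1$, together with their indices, are untouched when a part $j+1$ is inserted or removed.
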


\begin{proof}
For all $\la$ in $\overline{\F}_j$, we have $\la_{d_j^\la} = j+d_j^\la$. Hence, set $\psi'_j(\la)$ to be the pair $((j+1),\mu)$, where $\mu$ consists of the parts $\la_1+1,\ldots,\la_{d_j^\la-1}+1$ and $\la_i$ for $i>d_j^\la$. Inversely, for $((j+1),\mu)\in \{\Delta_{j,1}\}\times \F_{j+1}$, we set $\phi'_j((j+1),\mu))=\la$ whose parts are $\mu_1-1,\ldots,\mu_{d_{j+1}^\mu}-1$, $j+1+d_{j+1}^\mu $and $\mu_i$ for $i>d_{j+1}^\mu$. The proof that $\phi'_j$ and $\psi'j$ are inverse each other is similar to the proof of Proposition \ref{prop:phipsi}, as $d_j^\la =d_{j+1}^\mu+1$.
\end{proof}

To bijectively prove Corollary \ref{cor:mainbis}, 
we build two maps,
 $$\Phi'_j  : \bigsqcup_{k\geq 0} \{\Delta_{j,2k+1}\}\times \Pp_{j+2k+2}\to \{\Delta_{j,1}\}\times \F_{j+1}$$
 and 
 $$\Psi'_j : \{\Delta_{j,1}\}\times \F_{j+1} \to \bigsqcup_{k\geq 0} \{\Delta_{j,2k+1}\}\times \Pp_{j+2k+2}.$$
The map $\Phi'_j$ is simply obtained by going through the process of $\Phi_j$, except that we replace all the occurrences of ``$2k$'' by ``$2k+1$''. Similarly, $\Phi'_j$ is obtained by $\Psi_j$ by replacing ``$2k$'' by ``$2k+1$''. The proof of the well-definedness of the bijection is the same as the proof provided in Section \ref{sec:proofmain}.

\section{Bijective proof of Theorem \ref{theo:crank-frobenius}}\label{sec:crank-frobenius}

Before constructing the bijection for Theorem \ref{theo:crank-frobenius}, we first state the key result given by Hopkins, Sellers and Yee in \cite{HSY}, and that provides a combinatorial link between the crank and the Durfee decomposition. 

\begin{lem}[Hopkins-Sellers-Yee]\label{lem:crank-frob}
Let $j\in \Zo$ and $\la\in \Pp$. Then,
$$crank(\la)\leq -j \text{ if and only if } \omega(\la)\geq d_j^\la+j\,\cdot$$
\end{lem}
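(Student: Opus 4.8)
The statement to prove is Lemma~\ref{lem:crank-frob}: for $j\in\Zo$ and $\la\in\Pp$, one has $crank(\la)\leq -j$ if and only if $\omega(\la)\geq d_j^\la+j$. Since this is attributed to Hopkins--Sellers--Yee, the plan is to reconstruct the combinatorial argument directly from the definitions of $crank$, $\omega$, and $d_j^\la$. I would first dispose of small/degenerate cases and then treat the two directions by comparing the quantity $\omega(\la)$ with the part-profile of $\la$ at the index $d_j^\la$.

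\textbf{Step 1: reformulate the crank condition.} Recall $\omega(\la)$ is the number of parts equal to $1$, and when $\omega(\la)>0$, $crank(\la)=\eta(\la)-\omega(\la)$ where $\eta(\la)=\sharp\{i:\la_i>\omega(\la)\}$. The condition $crank(\la)\leq -j$ then reads $\eta(\la)\leq \omega(\la)-j$, i.e.\ there are at most $\omega(\la)-j$ parts of $\la$ exceeding $\omega(\la)$. When $\omega(\la)=0$, $crank(\la)=\la_1\geq 1>-j$ unless $j=0$ and $\la_1=0$ (the empty partition); one checks $d_0^\emptyset+0=0=\omega(\emptyset)$ directly, and for $j>0$ with $\omega(\la)=0$ the left side fails while $d_j^\la+j\geq j>0=\omega(\la)$ so the right side fails too, giving consistency. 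So assume $\omega(\la)=:w>0$ from now on.

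\textbf{Step 2: translate $d_j^\la+j\leq w$ into a statement about parts.} By definition $d_j^\la=\max\{i:\la_i-i\geq j\}$, so $d_j^\la+j\leq w$ is equivalent to $\la_{w-j+1}-(w-j+1)<j$ when $w-j+1\leq \ell(\la)+1$, i.e.\ $\la_{w-j+1}\leq w$ (using the convention $\la_{\ell(\la)+1}=0$), and holds automatically when $w-j+1>\ell(\la)+1$. Equivalently, $d_j^\la+j\leq w \iff \sharp\{i:\la_i>w\}\leq w-j$, since the parts exceeding $w$ are exactly $\la_1,\dots,\la_m$ for $m=\sharp\{i:\la_i\geq w+1\}$ and $\la_m-m\geq w+1-m$ forces $d_j^\la\geq m$ only when $w+1-m\geq j$. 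I would make this equivalence precise by noting that $\la_i>w$ implies $\la_i-i\geq w+1-i$, and carefully tracking when the maximizing index in the definition of $d_j^\la$ lands among the parts $>w$ versus the parts $\leq w$; the point is that for indices $i$ with $\la_i\leq w$ one has $\la_i-i\leq w-i<w$, and chaining these inequalities down shows $d_j^\la+j\leq w$ is the same as requiring no index $i\leq w-j$ with $\la_i-i\geq j$ to be "overtaken," which reduces to $\eta(\la)=\sharp\{i:\la_i>w\}\leq w-j$.

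\textbf{Step 3: conclude.} Combining Steps 1 and 2: $crank(\la)\leq -j \iff \eta(\la)\leq w-j \iff d_j^\la+j\leq w=\omega(\la)$, which is exactly the claim. I expect the main obstacle to be Step~2 --- the clean identification of $d_j^\la+j\leq\omega(\la)$ with $\eta(\la)\leq\omega(\la)-j$ requires a careful case analysis of whether the index realizing the maximum in \eqref{eq:dj} sits above or below the threshold $\omega(\la)$, and one must handle the boundary conventions $\la_0=\infty$, $\la_{\ell(\la)+1}=0$ with care so that the monotonicity of $i\mapsto\la_i-i$ is used correctly. Everything else is bookkeeping with the definition of the crank.
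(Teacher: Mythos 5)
Your plan is correct and is essentially the paper's own argument: both rewrite $crank(\la)\leq -j$ as $\eta(\la)\leq\omega(\la)-j$ and then use the defining inequalities $\la_{d_j^\la}-d_j^\la\geq j>\la_{d_j^\la+1}-(d_j^\la+1)$ to compare $\eta(\la)$ with $d_j^\la$ (one direction giving $\eta(\la)\leq d_j^\la$ when $\omega(\la)\geq d_j^\la+j$, the other giving $\eta(\la)\geq d_j^\la$ when $\omega(\la)<d_j^\la+j$), which is exactly your Step 2. The only point to tighten is the degenerate case $j=0$, $\omega(\la)=0$, $\la\neq\emptyset$, where you should also note $d_0^\la\geq 1$ so that both sides of the equivalence fail.
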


\begin{rem}
For $j\in \Zo$ and $\la\in \Pp$, Lemma \ref{lem:crank-frob}  implies that $crank(\la)= -j$ if and only if $d_{j+1}^\la+j+1>\omega(\la)\geq  d_j^\la+j$. By Remark \ref{rem:dj}, it equivalently means that $\omega(\la)=d_j^\la+j, \eta(\la)=d_j^\la$ and $d_{j+1}^\la=d_j^\la$.
\end{rem}

\begin{proof}[Proof of Lemma \ref{lem:crank-frob}]
We have that $crank(\emptyset)=0$ and $d_j^\emptyset = 0$. The equivalence then stands for $\la=\emptyset$. Now suppose that $\la\neq \emptyset$, which equivalently means that $\la_1>0$ and $d_0^\la>0$.

\begin{enumerate}
\item If $\omega(\la)\geq d_j^\la+j$, then, by Remark \ref{rem:dj}, $\omega(\la)\geq d_0^\la>0$, and $\eta(\la)\leq d_j^\la$ as $\la_{d_j^\la+1}\leq d_j^\la+j \leq \omega(\la)$. Therefore, $crank(\la)=\eta(\la)-\omega(\la)\leq d_j^\la-(d_j^\la+j)=-j$.

\item Otherwise, if $0<\omega(\la)< d_j^\la+j$, then $\la_{d_j^\la}>\omega(\la)$ so that $\eta(\la)\geq d_j^\la$. Hence, $crank(\la)=\eta(\la)-\omega(\la)> d_j^\la-(d_j^\la+j)=-j$. Finally, if $\omega(\la)=0$, $\la_1>0$ so that $crank(\la)>0\geq -j$.

\end{enumerate}

\end{proof}

For an integer $n$, set $\C_{\leq n}=\{\la\in\Pp:crank(\la)\leq n\}$ and $\C_{\geq n}=\{\la\in\Pp:crank(\la)\geq n\}$. Theorem \ref{theo:crank-frobenius} is then equivalent to saying that, for $j\in \Zo$, there exists a weight-preserving bijection $\Gamma_j$ between $\F_j\cap \overline{\Pp}_j \setminus\{\emptyset,(1)\}$ and $\C_{\leq -j}\setminus\{\emptyset,(1)\}$. Also, for $\la\neq \emptyset$, we then always have $d_j^\la+j>0$. The construction of $\Gamma_j$ is the following.

\begin{enumerate}

\item For $\la\in \F_j\cap \overline{\Pp}_j$ with $|\la|>1$, recall that $\la_{d_j^\la}-d_j^\la> j >\la_{d_j^\la+1}-(d_j^\la+1)$. Hence $\la_{d_j^\la}>j$. Let $\ell(\la)\geq t^\la\geq d_j$ such that $\la_{t^\la}>j=\la_{t^\la+1}$. The map $\Gamma_j$ consists in sustracting $1$ to the $d_j^\la$ largest finite parts, deleting the part $\la_{t^\la+1}=j$ and adding  $d_j^\la+j$ parts equal to $1$, so that it is obviously weight preserving. Formally, $\Gamma_j(\la)=\mu$ with
$$\begin{cases}
\mu_{i}=\la_i-1 \ \ \text{if} \ \ 1\leq i\leq d_j^\la\,,\\
\mu_{i}=\la_i\ \ \ \ \ \ \ \text{if} \ \ d_j^\la<i\leq t^\la\,,\\
\mu_{i}=\la_{i+1}\ \ \ \ \text{if} \ \ t^\la< i\leq \ell(\la)-\chi(j\geq 1)\,,\\
\mu_{i}=1 \ \ \ \ \ \ \ \ \text{if} \ \  \ell(\la)+\chi(j=0)\leq i\leq \ell(\la)+d_j^\la+j-\chi(j\geq 1) \,.
\end{cases}$$
Here $\chi(A)$ equals $1$ if $A$ is true and $0$ if not.
Observe that $\mu_{d_j^\la}-d_j^\la \geq j>\mu_{d_j^\la+1}-(d_j^\la+1)$, as $\mu_{d_j^\la+1}=\la_{d_j^\la+1}$ if $d_j^\la<t^\la$, or $\la_{d_j^\la+2}$ if $d_j^\la=t^\la<\ell(\la)-\chi(j\geq 1)$, or $1$ if $d_j^\la=t^\la=\ell(\la)-\chi(j\geq 1)$. Therefore, $d_j^\mu=d_j^\la$ and $\mu\in \C_{\leq -j}$. Finally, note that $\ell(\mu)=\ell(\la)+d_j^\la+j-\chi(j\geq 1)\geq 2d_j^\mu+j-\chi(j\geq 1)$.

\item Inversely, let $\emptyset\neq \mu\in \C_{-j}$ with $|\la|>1$. If $\mu_{d_j^\mu}=1$, then $d_j^\mu=1$ and $j=0$. In that case, as $\mu\neq (1)$, $\ell(\mu)\geq 2$, so that $\ell(\mu)\geq 2d_j^\mu$. If $\mu_{d_j^\mu}>1=\mu_{\ell(\mu)-d_j^\mu+j+1}$, then $\ell(\mu)\geq 2d_j^\mu+j$. We thus always have $\ell(\mu)\geq 2d_j^\mu+j$. For $j\geq 1$, let $d_j^\mu\leq t^\mu\leq \ell(\mu)-d_j^\mu-j$ such that $\mu_t>j\geq \mu_{t+1}$, and for $j=0$, set $t^\mu=\ell(\mu)-d_0^\mu$. The map $\Gamma^{-1}_j$ then consists in deleting the $d_j^\mu+j$ smallest parts equal to $1$, adding $1$ to the $d_j^\mu$ largest finite parts and a part $j$.
Formally, $\Gamma^{-1}_j(\mu)=\la$ with
$$\begin{cases}
\la_{i}=\mu_i+1 \ \ \text{if} \ \ 1\leq i\leq d_j^\mu\,,\\
\la_{i}=\mu_i \ \ \ \ \ \ \ \text{if} \ \ d_j^\mu< i\leq t^\mu\,,\\
\la_{i}=\mu_{i-1} \ \ \ \ \text{if} \ \ t^\mu+1<i\leq \ell(\mu)-d_j^\mu-j+1\,,\\
\la_{t^\mu+1}=j \,.
\end{cases}$$
As $\mu_{d_j^\mu}-d_j^\mu\geq j >\mu_{d_j^\mu+1}-(d_j^\mu+1)$ and $\mu_{d_j^\mu+1}\geq \la_{d_j^\mu+1}$, we have that $\la_{d_j^\mu}-d_j^\mu>j>\la_{d_j^\mu+1}-(d_j^\mu+1)$. Therefore, $d_j^\la=d_j^\mu$ and $\la\in \F_j\cap\overline{\Pp}_j$. Note that $\ell(\la)=\ell(\mu)-d_j^\mu-j+\chi(j\geq 1)$.

\end{enumerate}

The map $\Gamma_j$ is well-defined as $\Gamma_j((\F_j\cap\overline{\Pp}_j)\setminus\{\emptyset,(1)\})\subset \C_{\leq -j}$, and since $\mu=\Gamma_j(\la)$ satisfies $d_j^\mu=d_j^\la$, it is straightforward that $\Gamma^{-1}_j(\Gamma_j(\la))=\la$ as the corresponding $t^\mu$ equals $t^\la$. Inversely, $\Gamma^{-1}_j(\mu)\subset \F_j\cap\overline{\Pp}_j$ and $\Gamma_j(\Gamma^{-1}_j(\mu))=\mu$ for all $\mu\in \C_{\leq -j}$ with $|\la|>1$.

\begin{ex}\label{ex:crank-frobenius}
We have the following table:
\small{
$$
\begin{array}{|c|c|c|c|c|}
\hline
j&\la&d_j^\la&\Gamma_j(\la)&crank(\Gamma_j(\la))\\
\hline\hline
1&(1)&0&(1)&-1\\
0&\Delta_{0,2k}&k&(\underbrace{2k-1,\ldots,k}_{k\text{ consecutive}},\underbrace{k,\ldots,1}_{k\text{ consecutive}},\underbrace{1,\ldots,1}_{k})&-3\chi(k\geq 1)\\
j\geq 1&\Delta_{j-1,2k}&k&(\underbrace{2k-1+j,\ldots,k+j}_{k\text{ consecutive}},\underbrace{k+j,\ldots,1+j}_{k\text{ consecutive}},\underbrace{1,\ldots,1}_{k+j})&-j-\chi(k\geq 1)\\
0&(13,10,9,9,4,3,2,2,1,1)&4&(12,9,8,8,4,3,2,2,1,1,1,1,1,1)&-2\\
3&(12,9,9,5,4,4,4,3,2,2)&3&(11,8,8,5,4,4,4,2,2,1,1,1,1,1,1)&-3\\
5&(11,8,7,7,5,5,4,3,2,2)&2&(10,7,7,7,5,4,3,2,2,1,1,1,1,1,1,1)&-6\\
\hline
\end{array}\,.
$$
}
\end{ex}

\section{Bijective proof of Corollary \ref{cor:cranksym}}\label{sec:cor}

We here present a crank-sign reversing involution provided by Berkovich and Garvan in \cite{BG02}. The involution $\Lambda$ on $\Pp\setminus\{\emptyset,(1)\}$ is such that $crank(\Lambda(\la))=-crank(\la)$.  Let $\la\in \Pp$ with $|\la|>1$, and construct $\nu=\Lambda(\la)$ as follows.

\begin{enumerate}

\item 

\begin{enumerate}

\item If $\omega(\la)=0$, then $\la_1\geq \la_{\ell(\la)}>1$. Set
$$\begin{cases}
\nu_i=\la_{i+1} \ \ \text{if} \ \ 1\leq i\leq \ell(\la)-1\,,\\
\nu_i=1 \ \ \ \ \ \ \text{if} \ \ \ell(\la)\leq i\leq \la_1+\ell(\la)-1\,.
\end{cases}$$
Hence, $|\nu|=|\la|$, $\omega(\nu)=\la_1>0$ and $\eta(\nu)=0$ so that $crank(\nu)=-\la_1=-crank(\la)$.

\item If $\omega(\la)>0$ and $\eta(\la)=0$, then set
$$\begin{cases}
\nu_1=\omega(\la)\,\\
\nu_i= \la_{i-1}\ \ \ \ \ \ \text{if} \ \ 2\leq i\leq \ell(\la)-\omega(\la)+1\,.
\end{cases}$$
Hence, $|\nu|=|\la|$, $\omega(\nu)=0$, and $crank(\nu)=\omega(\la)=-crank(\la)$.

\end{enumerate}

One can easily check that these two cases are inverse each other.

\item If $\omega(\la),\eta(\la) >0$, let $\rho(\la)=\max\{\omega(\la),\la_2-1\}$ and let $\la^*$ be the conjugate of $\la$, which is defined by $\la^*_i=\sharp\{u: \la_u\geq i\}$ for all $i\in \{1,\ldots,\la_1\}$. Then, $\ell(\la^*)=\la_1$, $\eta(\la)=\la_{\omega(\la)+1}^*$ and $\omega(\la)=\la^*_1-\la_2^*$. We thus set 
$$\begin{cases}
\nu_1=\la_2^*+\la_1-\rho(\la)\,,\\
\nu_i= 1+\la_i^*\ \ \ \ \ \ \ \ \ \ \ \ \ \ \ \ \ \ \ \ \text{if} \ \ 2\leq i\leq\omega(\la)\,,\\
\nu_i=\la_{i+1}^*\ \ \ \ \ \ \ \ \ \ \ \ \ \ \ \ \ \ \ \ \ \ \text{if} \ \ \omega(\la)< i\leq \rho(\la)\,,\\
\nu_{i}=1\ \ \ \ \ \ \ \ \ \ \ \ \ \ \ \ \ \ \ \ \ \ \ \ \ \ \text{if} \ \ \rho(\la)<i\leq \rho(\la) +\eta(\la)\,.\\
\end{cases}$$ 
For all $\omega(\la)<i\leq \rho(\la)$, $2=\la^*_{\la_2}\leq \nu_i\leq \la^*_{\omega(\la)+2}\leq \eta(\la)$. Moreover, $\la^*_{2}\geq \la^*_{\omega(\la)+1}=\eta(\la)$, $\la_1-\rho(\la)\geq 1$ as $\la_1-\omega(\la)\geq 1$ and $\la_1-\la_2+1\geq 1$, and for all $2\leq i\leq \omega(\la)$, $\nu_i\geq 1+\la^*_{\omega(\la)}\geq 1+\eta(\la)$. Therefore, $\omega(\nu)=\eta(\la)$ and $\eta(\nu)=\omega(\la)$, and $crank(\nu)=-crank(\mu)$. Furthermore, since $\rho(\la)+1\geq \la_2$, $\la^*_i=1$ for all $\rho(\la)+1<i\leq \la_1$, and 
\begin{align*}
|\nu|&= \underbrace{\omega(\la)+\la_2^*}_{\la^*_1} + (\la_1-\rho(\la)-1)+\sum_{i=2}^{\omega(\la)} \la^*_i + \sum_{\omega(\la)+2}^{\rho(\la)+1} \la^*_i + \underbrace{\eta(\la)}_{\la^*_{\omega(\la)+1}}\\
&=  \sum_{i=1}^{\la_1} \la^*_i =|\la^*|
\end{align*}
so that $|\nu|=|\la|$. In addition, 
$$\begin{cases}
\nu_i^*= \la_i-1\ \ \ \ \ \ \ \ \ \ \ \ \ \ \ \ \ \ \ \ \text{if} \ \ 2\leq i\leq\eta(\la)\,,\\
\nu^*_i=\la_{i-1}\ \ \ \ \ \ \ \ \ \ \ \ \ \ \ \ \ \ \ \ \ \ \text{if} \ \ \eta(\la)+1< i\leq \la^*_2+1\,,\\
\nu^*_{\eta(\la)+1}=\omega(\la)\,,\\
\end{cases}$$
and $\rho(\nu)=\la^*_2$ if $\omega(\la)>1$ and $\rho(\nu)=\eta(\la)=\la^*_2$ if $\omega(\la)=1$ so that $\rho(\nu)=\la^*_2$. Finally, $\nu^*_2=\rho(\la)$ if $\rho(\la)>\omega(\la)$, and $\nu^*_2=\omega(\la)=\rho(\la)$ if $\rho(\la)>\omega(\la)$ as $\nu_{\omega(\la)}\geq \omega(\la)+1\geq 2$.  Hence, $\nu^*_2=\rho(\la)$, and for $\Lambda(\nu)=\kappa$, we have
$$\begin{cases}
\kappa_1= \rho(\la)+\nu_1-\la^*_2=\la_1\,,\\
\kappa_i= 1+\nu_i^* = \la_i\ \ \ \ \ \ \ \ \ \ \ \ \ \ \ \text{if} \ \ 2\leq i\leq\eta(\la)\,,\\
\kappa_i=\nu_{i+1}^*=\la_i\ \ \ \ \ \ \ \ \ \ \ \ \ \ \ \ \ \ \text{if} \ \ \eta(\la)< i\leq \la^*_2\,,\\
\kappa_{i}=1\ \ \ \ \ \ \ \ \ \ \ \ \ \ \ \ \ \ \ \ \ \ \ \ \ \ \ \ \ \text{if} \ \ \la^*_2<i\leq \la^*_2 +\omega(\la)=\la^*_1\,.\\
\end{cases}$$
We then conclude that $\Lambda(\Lambda(\la))=\la$.
\end{enumerate}

\begin{ex}\label{ex:cor}
We have the following table:
$$
\begin{array}{|c|c|c|c|c|}
\hline
\la&\omega(\la)&\eta(\la)&\rho(\la)&\Lambda(\la)\\
\hline\hline
(12,9,8,8,4,3,2,2,1,1,1,1,1,1)&6&4&8&(12,9,7,6,5,5,4,2,1,1,1,1)\\
(11,8,8,5,4,4,4,2,2,1,1,1,1,1,1)&6&3&7&(13,10,8,8,5,4,3,1,1,1)\\
(10,7,7,7,5,4,3,2,2,1,1,1,1,1,1,1)&7&1&7&(12,10,8,7,6,5,5,1)\\
\hline
\end{array}\,.
$$
\end{ex}


\section{Conclusion}\label{sec:conclusion}

As we construct the different intermediate bijections in Section \ref{sec:mainbijection}, \ref{sec:crank-frobenius} and \ref{sec:cor}, we now present the full scope of the bijection for Theorem \ref{theo:main}. For $j\in\Zo$, the bijection between $\M_j\cap \overline{\Pp}_j$ and the set of partitions with crank at least equal to $j$ is given by $\Lambda\circ \Gamma_j\circ \Phi_j$, and its inverse is $\Psi_j\circ\Gamma^{-1}_j\circ\Lambda$. 

\begin{ex}
Using Examples \ref{ex:main}, \ref{ex:crank-frobenius} and \ref{ex:cor}, the images of $(11,8,7,7,5,5,4,3,2,2)$ in the cases $j=0,3,5$ are respectively 
$$(12,9,7,6,5,5,4,2,1,1,1,1),(13,10,8,8,5,4,3,1,1,1)\text{ and }(12,10,8,7,6,5,5,1)\,.$$
In particular, in Theorem \ref{theo:crank-mex}, the partition $(11,8,7,7,5,5,4,3,2,2)$ with odd mex $1$ can be associated to the partition $(12,9,7,6,5,5,4,2,1,1,1,1)$ with non-negative crank $2$.
\end{ex}

\begin{ex} Here is a list of all the partitions of $9$ in $\M_0$ and their successive images by applying $\Phi_0$, $\Lambda_0$ and $\Gamma$.

$$
\begin{array}{|c|c|c|c|}
\hline
\la\in \M_0&\Phi_0(\la)\in \F_0&\Lambda_i(\Phi_0(\la))\in \C_{\leq 0}&\Gamma(\Lambda_i(\Phi_0(\la)))\in \C_{\geq 0}\\
\hline\hline
(9)&(9)&(8,1)&(8,1)\\
(7,2)&(8,1)&(7,1,1)&(6,2,1)\\
(6,3)&(6,3)&(5,2,1,1)&(5,3,1)\\
(5,4)&(5,4)&(4,3,1,1)&(4,3,1,1)\\
(5,2,2)&(7,1,1)&(6,1,1,1)&(4,2,2,1)\\
(4,3,2)&(4,3,2)&(3,2,2,1,1)&(4,4,1)\\
(3,3,3)&(4,4,1)&(3,3,1,1,1)&(3,3,3)\\
(3,2,2,2)&(6,1,1,1)&(5,1,1,1,1)&(2,2,2,2,1)\\
\hline
(6,2,1)&(5,3,1)&(4,2,1,1,1)&(3,3,2,1)\\
(5,2,1,1)&(4,3,1,1)&(3,2,1,1,1,1)&(4,3,2)\\
(4,2,2,1)&(3,3,2,1)&(2,2,2,1,1,1)&(3,2,2,2)\\
(4,2,1,1,1)&(3,3,1,1,1)&(2,2,1,1,1,1,1)&(5,2,2)\\
(2,2,2,2,1)&(5,1,1,1,1)&(4,1,1,1,1,1)&(5,4)\\
(2,2,2,1,1,1)&(4,1,1,1,1,1)&(3,1,1,1,1,1,1)&(6,3)\\
(2,2,1,1,1,1,1)&(3,1,1,1,1,1,1)&(2,1,1,1,1,1,1,1)&(7,2)\\
(2,1,1,1,1,1,1,1)&(2,1,1,1,1,1,1,1)&(1,1,1,1,1,1,1,1,1)&(9)\\
\hline
\end{array}\,.
$$ 
\end{ex}


\section*{Acknowledgement}

We would like to thank Brian Hopkins and Dennis Eichhorn. Their insightful comments on the early version of this paper help us to improve it.

This work was supported by the LABEX MILYON (ANR-10-LABX-0070) of Universit\'e de Lyon, within the program ''Investissements d'Avenir" (ANR-11-IDEX-0007) operated by the French National Research Agency (ANR).


\end{document}